\theoremstyle{plain}
\newtheorem{lema}{Lemma}[section]
\newtheorem{teo}[lema]{Theorem}
\newtheorem{coro}[lema]{Corollary}
\newtheorem{prop}[lema]{Proposition}
\theoremstyle{definition}
\newtheorem{defi}[lema]{Definition}
\newtheorem{ejem}[lema]{Example}
\newtheorem{quest}[lema]{Question}
\newtheorem{rem}[lema]{Remark}
\theoremstyle{remark}
\numberwithin{equation}{section}
\newcommand{\N}{\mathbb N}
\newcommand{\Z}{\mathbb Z}
\newcommand{\Q}{\mathbb Q}
\newcommand{\R}{\mathbb R}
\newcommand{\C}{\mathbb C}
\newcommand{\f}{\frac}
\newcommand{\tf}{\tfrac}
\newcommand{\sk}{\smallskip}
\newcommand{\msk}{\medskip}
\newcommand{\g}{\gamma}
\newcommand{\G}{\Gamma}
\newcommand{\ld}{\lambda}
\newcommand{\Ld}{\Lambda}
\title[The eta function and $\eta$-invariant of $\mathbb{Z}_{2^r}$-manifolds]{The eta function and eta invariant \\ of $\mathbb{Z}_{2^r}$-manifolds}
\address{CIEM, Universidad Nacional de C\'ordoba (UNC), CONICET, FaMAF, C\'ordoba, Rep\'ublica Argentina.}
\email{podesta@famaf.unc.edu.ar}
\author{Ricardo A.\@ Podest\'a}
\keywords{APS operator, eta function, $\eta$-invariant, compact flat manifolds}
\thanks{2010 {\it Mathematics Subject Classification.} Primary 58J28;\,Secondary 58Cxx, 20H15, 11M35.}
\thanks{Partially supported by CONICET, FONCyT and SECyT-UNC}
\begin{document}
\bibliographystyle{plain}

\begin{abstract}
We compute the eta function $\eta(s)$ and its corresponding $\eta$-invariant for the Atiyah-Patodi-Singer operator $\mathcal{D}$ acting on an orientable compact flat manifold of dimension $n =4h-1$, $h\ge 1$, and holonomy group $F\simeq \Z_{2^r}$, $r\in \N$. We show that $\eta(s)$ is a simple entire function times $L(s,\chi_4)$, the $L$-function associated to the primitive Dirichlet character modulo 4.
The $\eta$-invariant is 0 or equals $\pm 2^k$  for some $k\ge 0$ depending on $r$ and $n$. 
Furthermore, we construct an infinite family $\mathcal{F}$ of orientable $\Z_{2^r}$-manifolds with $F\subset \mathrm{SO}(n,\Z)$. 
For the manifolds $M\in \mathcal{F}$ we have $\eta(M)=-\tf12|T|$, where $T$ is the torsion subgroup of $H_1(M,\Z)$, and that $\eta(M)$ determines the whole eta function $\eta(s,M)$.
\end{abstract}

\maketitle
\setcounter{tocdepth}{1}

\section{Introduction} \label{sec intro}
\subsubsection*{Eta series and $\eta$-invariant}
Let $M$ be an oriented compact Riemannian manifold of dimension $n=4h-1$, $h \ge 1$, and consider the Atiyah-Patodi-Singer operator $\mathcal{D}$ (\textit{APS-operator} for short) defined on the space of smooth even forms $\Omega^{ev}(M) = \bigoplus_{p=0}^{2h-1} \Omega^{2p}(M)$ by
$$\mathcal{D} \phi = (-1)^{h+p-1} (*d-d*) \phi$$
with $\phi\in \Omega^{2p}(M)$, where $\Omega^{2p}(M)$ denotes the set of degree $2p$ forms. 
This operator is closely related to the signature operator. In fact, 
$\mathcal{D}$ is the tangential boundary operator of the signature operator $\mathcal{S}$ acting on a $4h$-dimensional manifold $\tilde M$ having $M$ as its boundary.

By compactness of $M$, $\mathcal{\mathcal{\mathcal{D}}}$ has a discrete spectrum, $\mathrm{Spec}_\mathcal{D}(M)$, of real eigenvalues $\lambda$ with finite multiplicity $d_\lambda$ which accumulate only at infinity. The \emph{eta series}
\begin{equation} \label{Eta(s)}
\eta(s) = \sum_{0 \not = \lambda \in \mathrm{Spec}_\mathcal{D}(M)} sign(\lambda) \, |\lambda|^{-s}, \qquad \mathrm{Re}(s)> n,
\end{equation}
defines a holomorphic function having a meromorphic continuation to $\C$, also denoted by $\eta(s)$, having (possibly) simple poles in the set 
$\{n-k : k\in \N_0\}$. Remarkably, $\eta(s)$ is holomorphic at $s=0$ and the value $\eta = \eta(0)$ 
is called the \emph{eta invariant} of $\mathcal{D}$.

Both $\mathcal{D}$ and $\eta(s)$ were first introduced and studied by Atiyah, Patodi and Singer in a sequel of 3 classical papers \cite{APS1}, where they also proved the regularity of $\eta(s)$ at the origin in the case of odd dimension. The finiteness of $\eta$ in any dimension is due to P.\@ B.\@ Gilkey 
(\cite{Gi}). Actually, the results in \cite{APS1} and \cite{Gi} are valid for arbitrary elliptic differential operators. 

\msk
Eta series and $\eta$-invariants have been an active area of research since their appearance in \cite{APS1}. A lot of progress have been made mainly by Peter Gilkey, Werner Müller, Xianzhe Dai, Weiping Zhang, Robert Meyerhoff, Mingquing Ouyang and Sebastian Goette 
among others. 
Eta series and $\eta$-invariants have been studied in several contexts. For instance, 
in equivariant settings (Donelly '76, Zhang '90 and Goette '99, '00, '09),
in relation to connective $K$-theory (Gilkey '84 and Barrera-Ya\~nez--Gilkey '99, '03),
manifolds with boundary (Müller '93, '94, Bunke '95, '15 and Dai '02, '06)
and cobordism (Bahri-Gilkey '87, Gilkey '88, '88, '97, Gilkey-Botvinnik '95, '96 and Dai '05),
flat vector bundles (Zhang '04 and Ma-Zhang '06, '06, '08),
even dimensions (Gilkey '85, Dai '12 and Dai-Zhang '15),
determinant lines (Dai-Freed '94, '95), adiabatic limits (Zhang '94 and Dai '06), 
Rokhlin congruences (Zhang '92, '94), relations with $L$-functions and modular forms (Atiyah-Donnelly-Singer '83, '84, Müller '90, Bismut-Cheeger '92 and Han-Zhang '04, '15), etc. 

They were also studied, and in some cases computed, for certain classes of manifolds. Namely, compact flat manifolds 
(\cite{GMP}, \cite{MP.TAMS06}, \cite{MP.PAMQ09}, \cite{MP.JGA11}, \cite{MP.AGAG12}, \cite{Po.UMA05}, \cite{Sz1}, \cite{Sz2}), spherical space forms (\cite{Gi1}, \cite{Gi4}, \cite{Gi5}, \cite{Gi6}, \cite{GiBo1}), hyperbolic manifolds (\cite{LR}, \cite{Me3}, \cite{Me4}, \cite{Ou1}, \cite{Ou2}, \cite{Ou3}) and orbifolds (\cite{Fa}).

\subsubsection*{Compact flat manifolds}
Any orientable compact flat manifold (in what follows \textit{cfm} for short) is isometric to 
$M_\G = \G \backslash \R^n$, with $\G$ an orientable Bieberbach group, i.e.\@ a discrete, cocompact, torsion-free subgroup of 
the orientation preserving isometry group $\mathrm{I}^+(\R^n) = \mathrm{SO}(n) \ltimes \R^n$ of $\R^n$. 
Thus, 
$\G = \langle \gamma= BL_b, L_\Lambda \rangle$, with $L_\Lambda = \{L_\lambda : \lambda \in \Lambda \}$,
where $B \in \mathrm{SO}(n)$, $L_b$ denotes translation by $b\in \R^n$, $\g^k \ne \mathrm{Id}$ for every $k\in \N$ and $\Lambda$ is a $B$-stable lattice in $\R^n$. We will usually identify the point group with the holonomy group, that is  $F\simeq \Lambda \backslash \G$.

Since $BL_\lambda B^{-1}=L_{B\lambda} \in L_\Lambda$ for any $B\in \mathrm{SO}(n), \lambda \in \Lambda$, conjugation by $F$ in 
$\Lambda \simeq \Z^n$ defines the integral holonomy representation $\rho$ of $\G$ (which does not determine $M_\Gamma$ uniquely, in general). A \textit{$G$-manifold} is a cfm with holonomy group $F\simeq G$. In this paper we will be concerned with the case $F\simeq \Z_{2^r}$, for $r\in \N$.

\msk 
In \cite{MP.JGA11}, we give a general expression for $\eta(s)$ and $\eta$ for $\mathcal{D}$ acting on an arbitrary orientable cfm 
$M_\G$ (see Theorems 3.3, 3.5 and 4.2). Also, simpler expressions can be found in its sequel \cite{MP.AGAG12} in the case of cyclic holonomy group (see Proposition \ref{AGAG}). There, it is shown that the computation of $\eta(s)$ can be reduced to the case when $F$ is cyclic (\cite{MP.AGAG12}, 
Proposition~5.1). 
This, enabled us to compute the $\eta$-invariant of some nice families of cfm's, such as $F$-manifolds with 
$F\simeq \Z_p, \Z_p\times \Z_q, \Z_2^k$ and $\Z_p^k$, with $p,q$ odd primes and $k\ge 2$; or even with non-abelian holonomy group $F$, where $F$ is of order 8 or $F$ is metacyclic (dihedral and of odd order). Expressions for $\eta(s)$ and $\eta$ on cfm's for the spin Dirac operator $D$ were studied in \cite{MP.TAMS06} (the general case and $\Z_2^k$-manifolds), in \cite{Po.UMA05} ($\Z_4$-manifolds) and in \cite{MP.PAMQ09} and \cite{GMP} ($\Z_p$-manifolds, $p$ odd prime).

\subsubsection*{Motivation and results}
As we have mentioned, in \cite[\S 5]{MP.AGAG12}, a method to reduce the computation of the $\eta$ function for a manifold with arbitrary holonomy group $F$ to the case of cyclic holonomy group is presented. 
Hence, for a general cyclic holonomy group 
$F\simeq \Z_N = \Z_{p_1^{r_1}} \times \cdots \times \Z_{p_t^{r_t}}$, with $p_1,\ldots,p_t$ different primes, 
and thus one is basically led to the study of the contributions of each $\Z_{p_i^{r_i}}$ to the computation of $\eta(\Gamma)$. 
As a step in this direction, it is the goal of this paper to cover the case of the even prime $2$. We are thus interested in the computation of $\eta(s)$ and $\eta$ for the operator $\mathcal{D}$ on arbitrary orientable $\Z_{2^r}$-manifolds 
of odd dimension $n=2m+1$, $m$ odd. 
In this way, the present paper can be considered as a natural continuation of \cite{MP.JGA11} and \cite{MP.AGAG12}. We remark here that $\Z_{2^r}$-manifolds are not classified (as it is the case for $\Z_p$ manifolds, $p$ prime).

\sk
A brief outline of the paper is as follows. In Section \ref{sec rotation}, we study the rotation angles of a matrix $B\in \mathrm{GL}(n)$ of order $2^r$ and its characteristic polynomial. Some trigonometric identities involving sines and cotangents at special angles of the form $\tf{k\ell \pi}{2^r}$, with $k,\ell$ odd integers, play a key role in the computation of $\eta(s)$ and $\eta$, allowing great simplifications in the steps of the proofs, which lead to the final expressions in the statements. For clarity, they are presented at the end in the Appendix.

Sections \ref{sec etas} and 4 are devoted to the computation of the $\eta$-function and $\eta$-invariant, respectively, for arbitrary 
$\Z_{2^r}$-manifolds in all dimensions $n=2m+1$, $m$ odd. Let $M_\G$ be a $\Z_{2^r}$-manifold with $\gamma=BL_b$ the generator of $\Lambda\backslash \G$. 
In Theorem~\ref{teo.etasz2r} we show that, in the non-trivial cases,  
$$\eta(s) = \tf{2 \eta}{(2^{r-1-\nu}\pi \lambda_{_B})^s} L(s,\chi_4),$$
where $\nu \in [0,r-2]\in \Z$ and $\lambda_{_B}\in \R$ are certain constants depending on the metric,
and $L(s,\chi_4)$ is the $L$-function associated to the primitive Dirichlet character modulo $4$. 
Here, the $\eta$-invariant 
has the simple expression (see Proposition \ref{prop eta0})
$$\eta = \pm  \, 2^{f(B)-2},$$ 
where $\pm$ is a sign depending subtly on $\g$ and $f(B)$ is the number of irreducible factors of the characteristic polynomial of $B$ on $\Z[x]$ (see \eqref{fb}). This gives an easy and direct way of computing the invariant.
We give some examples to illustrate the method.

Next, in Section \ref{sec.family}, we introduce an infinite family $\mathcal{F}$ of orientable $\Z_{2^r}$-manifolds of 
$\dim n=4h-1$, $h\ge 1$, each having integral holonomy representation. 
In Proposition \ref{teoH1} we compute the first (co)homology groups over $\Z$ and $\Z_2$ of $M\in \mathcal{F}$. This allows us to get the $\eta$-invariant of $M$ (and hence $\eta(s)$, by \eqref{generaletas} and \eqref{generaleta0}) in topological terms; namely 
$$\eta(M) = -\tf 12 |\mathrm{Tor}(H_1(M,\Z))|.$$
We also posed some queries, see Questions \ref{q1}, \ref{q2} and \ref{q3}.

In Section \ref{sec eta*}, we show that for any $\Z_{2^r}$-manifold $M$, there is some $M_{k(r)} \in \mathcal{F}$ such that
$\eta(M) = \eta(M_{k(r)})$. This allows us to prove that the set of possible values of $\eta(M)$, 
with $M$ ranging over all $\Z_{2^r}$-manifolds, $r\ge 1$, is $0$ or a non-negative power of $2$.
Then, we show that there are infinite families of $\Z_{2^r}$-manifolds with $\eta=2^k$, for each $k$ (with growing dimensions). 
Moreover, there is a number $n_{r,k}$ such that for every $n\ge n_{r,k}$ there is a $\Z_{2^r}$-manifold of dimension $n$ with $\eta=2^k$. 
As a result, varying $r$ in the natural numbers we have
$$\eta(\text{$\Z_{2^r}$-manifolds}) = \{0\}\cup \{\pm 2^{k} : k\in \N_0\}.$$

Finally, in the last section, we compare expression \eqref{cyclic eta0} for the $\eta$-invariant of any $\Z_{2^r}$-manifold, with Donnelly's expression \eqref{doneli}, valid only for those $\Z_{2^r}$-manifolds having holonomy group $F\subset\mathrm{SO}(n,\Z)$ (see Proposition \ref{etaz2rdoneli}). In this case, we have $\eta=\pm 2^{f(B)-2}=\pm 2^{c(B)-2}$, where $c(B)$ is the number of orbits of the action of $B$ on the canonical basis vectors, thus giving an alternative way of computing the invariant.

\section{Rotation angles for order $2^r$ matrices} \label{sec rotation}
As we shall later see, the results in this section are crucial for the determination of $\eta(s)$ in Theorem \ref{teo.etasz2r}. 
Here, we will study the rotation angles of a matrix in $\mathrm{GL}(n,\Z)$ of order $2^r$.
In the Appendix (for clarity), we will compute some trigonometric identities related to them.

\sk 
For any $d\in \N$, let $\mathcal{U}_d =\{\omega \in \C : \omega^d=1\}$ be the group of complex $d$-th roots of unity and denote by
$\mathcal{U}_d^*=\{\omega \in\mathcal{U}_d : \mathrm{ord}(\omega) =d\}$ the subgroup of primitive roots in $\mathcal{U}_d$.
Then, for any $N\in \N$, we clearly have $\mathcal{U}_N = \bigcup_{d \mid N} \mathcal{U}_d^*$.
The cyclotomic polynomial of order $d$ is defined by $\Phi_d(x)= \prod_{\omega \in \mathcal{U}_d^*} (x-\omega) \in \C[x]$. 
It is known that $\Phi_d(x) \in \Z[x]$ is irreducible over $\Q$ of degree $\varphi(d)$, where $\varphi$ is the Euler totient function. We have the relation 
\begin{equation}\label{cyclotomic decomp}
x^n-1 = \prod_{d\mid n} \Phi_d(x).
\end{equation}

\begin{lema}\label{lema charpol}
Let $B \in \mathrm{GL}(n,\Z)$ of order $N \in \N$. Then, the characteristic polynomial $p_{_B}(x)$ of $B$ has the prime factorization
\begin{equation}\label{eq. charpol}
p_{_B}(x) = \prod_{d\mid N} \Phi_d(x)^{c_d}
\end{equation}
in $\Z[x]$, with $c_d\ge0$ for any $d\mid N$ and $c_N\ge 1$. Also, $\deg p_{_B} = \sum_{d\mid N} c_d \cdot \varphi(d)$.
\end{lema}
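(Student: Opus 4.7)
The plan is to read the factorization off from the cyclotomic decomposition of $x^N-1$, using only the hypothesis $B^N=I$ (together with minimality of $N$) and the fact that $p_{_B}(x)$ and the minimal polynomial $m_{_B}(x)$ share the same irreducible factors over $\Q$. First, since $B$ has order $N$, we have $B^N=I$, so $m_{_B}(x)$ divides $x^N-1$ in $\Q[x]$. By \eqref{cyclotomic decomp}, $x^N-1=\prod_{d\mid N}\Phi_d(x)$, and each $\Phi_d(x)$ is irreducible in $\Z[x]$ (hence in $\Q[x]$, by Gauss). The $\Phi_d$ being pairwise coprime, $m_{_B}$ must be a squarefree product of some $\Phi_d$'s with $d\mid N$.

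Next, I would invoke the standard fact (e.g.\@ from rational canonical form, or from $p_{_B} \mid m_{_B}^n$) that $p_{_B}$ and $m_{_B}$ have exactly the same irreducible factors in $\Q[x]$. Combined with the previous step, this forces every irreducible factor of $p_{_B}$ to be some $\Phi_d$ with $d\mid N$. Because $p_{_B}$ is monic with integer coefficients (as $B \in \mathrm{GL}(n,\Z)$) and each $\Phi_d$ is a monic irreducible in $\Z[x]$, unique factorization in $\Z[x]$ yields
\[
p_{_B}(x) = \prod_{d\mid N}\Phi_d(x)^{c_d}, \qquad c_d\ge 0.
\]

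For the claim $c_N\ge 1$, I would argue by minimality of $N$. If $c_N=0$, then every eigenvalue of $B$ is a primitive $d$-th root of unity for some proper divisor $d$ of $N$. Since $m_{_B}$ is squarefree, $B$ is diagonalizable over $\C$, and one reads off that $B^K=\I$ where $K$ is the lcm of the $d$'s with $c_d\ge 1$. Minimality of $N$ then forces $N\mid K$; but $K\mid N$ automatically, so $K=N$. In the prime-power setting $N=2^r$ relevant to the paper, the lcm of all proper divisors of $N$ equals $N/2<N$, a contradiction; hence $\Phi_N\mid m_{_B}$, i.e.\@ $c_N\ge 1$. Finally, taking degrees in the displayed factorization and using $\deg\Phi_d=\varphi(d)$ gives the degree formula $\deg p_{_B}=\sum_{d\mid N}c_d\,\varphi(d)$.

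The main obstacle is the $c_N\ge 1$ step, which is the only place where the minimality of the order is genuinely used: everything else is formal bookkeeping with cyclotomic polynomials. I expect one must either restrict to prime-power $N$ (as is implicitly the case here, since later applications take $N=2^r$) or give a slightly finer argument using the structure of $S_B=\{d:\Phi_d\mid m_{_B}\}$ so that $\mathrm{lcm}(S_B)=N$ forces $N\in S_B$; the first option is enough for all subsequent uses in the paper.
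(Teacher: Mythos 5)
Your argument is correct and lands on the same factorization, but by a genuinely different route. The paper works directly with $p_{_B}$: it factors $p_{_B}$ into monic irreducibles over $\Z$ and uses a Galois-conjugation step (if $\alpha$ is a root of $p_{_B}$ of multiplicative order $h$, then so is $\alpha^k$ for every $k$ with $(k,h)=1$, whence $\Phi_h\mid p_{_B}$) to identify each irreducible factor with a cyclotomic polynomial. You instead pass through the minimal polynomial, using $m_{_B}\mid x^N-1=\prod_{d\mid N}\Phi_d(x)$ together with the standard fact that $p_{_B}$ and $m_{_B}$ have the same irreducible factors over $\Q$. Both are sound and of comparable length; your version has the advantage of making the diagonalizability of $B$ over $\C$ explicit, which you then exploit for the last claim.

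On that last claim your caution is warranted, and this is where your write-up is actually more careful than the paper's. The paper disposes of $c_N\ge 1$ with ``Clearly, we must have $c_N\ge 1$ for $B$ to have order $N$,'' but as you suspect this is false for general $N$: for example $B=\mathrm{diag}(-1,C)$ with $C$ the companion matrix of $\Phi_3(x)=x^2+x+1$ lies in $\mathrm{GL}(3,\Z)$, has order $6$, and has $p_{_B}=\Phi_2\Phi_3$, so $c_6=0$. What the order hypothesis actually gives (via your diagonalizability argument) is $\mathrm{lcm}\{d : c_d\ge 1\}=N$, and this forces $N\in\{d : c_d\ge 1\}$ precisely when $N$ is a prime power, since only then is $N$ not expressible as an lcm of proper divisors. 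As you note, the lemma is only ever invoked with $N=2^r$ (Corollary \ref{coroz2r}), so restricting the $c_N\ge1$ assertion to prime powers repairs the statement without affecting anything downstream.
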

\begin{proof}
We know that $p_{_B}(x)$ is a monic polynomial in $\Z[x]$, all of whose roots are in $\mathcal{U}_N$.
Since $\Z[x]$ is a unique factorization domain, $p_{_B}(x)=\prod_j p_j(x)$, with $p_j(x) \in \Z[x]$ monic irreducible for each $j$.
We will show that the only monic irreducible polynomial in $\Z[x]$, with roots in $\mathcal{U}_h$ is $\Phi_h(x)$, $h\mid N$. Let $\alpha$ be a root of $p_{_B}(x)$ and let $\sigma \in \mathrm{Gal}(\Q(\alpha)/\Q)$
with $\sigma(\alpha)=\alpha^k$ for $(k,h)=1$. We have $p_{_B}(\alpha^k) = p_{_B}(\sigma(\alpha)) = \sigma(p_{_B}(\alpha)) = 0$, 
for any $(k,h)=1$. Thus, $\Phi_h(x) \mid p_{_B}(x)$ in $\Q[x]$, hence in $\Z[x]$, and, by irreducibility, it must be one of the $p_j(x)$'s. Thus, we get \eqref{eq. charpol} with $c_d\ge 0$ for each $d\mid N$. 
Clearly, we must have $c_N\ge 1$ for $B$ to have order $N$.
The assertion on the degree is obvious. 
\end{proof}

For $B\in \mathrm{GL}(n,\Z)$ we define $f(B)$ to be the \emph{number of irreducible factors} in the prime decomposition of the characteristic polynomial $p_{_B}(x)$ of $B$ in $\Z[x]$. Thus, by \eqref{eq. charpol}, 
\begin{equation}\label{fb}
1\le f(B)= \sum_{d \, \mid \, o(B)} c_d
\end{equation}
where $o(B)$ is the order of $B$. If $B= (\begin{smallmatrix} B' & \\ & 1 \end{smallmatrix})$, then $f(B') = f(B)-1$ is the number of irreducible factors in the prime decomposition of $p_{_{B'}}(x)= p_{_B}(x)/(x-1)$.

\sk 
Let $n=2m+1$.
Since $\mathrm{SO}(n)$ is a compact connected Lie group, it contains a maximal torus 
$T_n = \{x(t_1,\ldots,t_m) : t_1,\ldots,t_m\in \R \}$, where
$$x(t_1,\ldots,t_m)  := \mathrm{diag}(x(t_1),\ldots,x(t_m),1)$$
with $x(t) = (\begin{smallmatrix} \cos t & -\sin t \\ \sin t & \cos t \end{smallmatrix})$, $t\in \R$.
Also, $T_{n-1} = \{x(t_1,\ldots,t_m) : t_1,\ldots,t_m\in \R \}$, where now
$x(t_1,\ldots,t_m)  = \mathrm{diag}(x(t_1),\ldots,x(t_m))$.

\sk 
Let $\G\subset \mathrm{I}^+(\R^n)$ be a Bieberbach group and for any $BL_b\in \G$ put 
$$n_B := \dim (\R^n)^B.$$ 
Note that $n_{\pm B}$ is the multiplicity of the $(\pm 1)$-eigenvalues of $B$.
Since $B\in \mathrm{SO}(n)$, $B$ is conjugate to some element $x_B\in T_n$. Thus, there is $C\in \mathrm{GL}(n)$ 
such that $CBC^{-1}=x_B$. Since $x_B$ fixes $e_n$, $B$ fixes $u=C^{-1}e_n$ and hence $n_{x_B}=n_B\ge 1$ 
(this is known by other methods, see for instance \cite{MR}).
Also, note that $p_{_B}(x)=p_{x_B}(x)$ and therefore $f(B)=f(x_B)$.

\begin{prop}\label{lema angles}
Suppose $\G$ is an orientable Bieberbach group of dimension $n=2m+1$.
Let $BL_b \in \G$ with $B$ of order $N$. Then, $B$ is conjugate in $T_{2m}$ to $x_B=x(t_1,t_2\ldots,t_m)$ or to 
$x_B'=x(-t_1,t_2,\ldots,t_m)$ with
$$x_B = x(\underbrace{ j_{s,1} \tf{2\pi}{d_s}, \ldots, j_{s,f_s} \tf{2\pi}{d_s} }_{c_{d_s}\ge 1},\ldots, 
\underbrace{ j_{1,1} \tf{2\pi}{d_1}, \ldots, j_{1,f_1} \tf{2\pi}{d_1} }_{c_{d_2}}, \underbrace{0}_{c_1-1}),$$
where
$1 = d_1 < d_2 < \cdots < d_s = N$ are all the divisors of $N$ and each $c_{d_j}$ is the exponent of 
$\Phi_{d_j}(x)$ in the prime factorization of $p_{x_B}(x)$ in \eqref{eq. charpol}, such that
$$(j_{i,k}, d_i)=1, \qquad 1\le j_{i,k} \le \lfloor \tf{d_i}{2} \rfloor, \qquad  1 \le k \le f_i = \tf{\varphi(d_i)}{2},$$
for each $2\le i \le s$.
\end{prop}
\begin{proof}
Since $B' \in \mathrm{SO}(n-1)$, $B'$ is conjugate to some element $x_B$ in $T_{n-1}$. 
There are two conjugacy classes in $\mathrm{SO}(n-1)$, and hence $B$ is conjugate to $x_B=x(t_1,t_2,\ldots,t_m)$ or to $x_B'=x(-t_1,t_2,\ldots,t_m)$ with $0\le t_i < 2\pi$ for $i=1,\ldots,m$ (see the comments in between (3.8) -- (3.10) in \cite{MP.JGA11}).

Since $B$ is of order $N>1$, it is clear that the rotation angles must be of the form $\tf{2j\pi}{N}$, for certain 
$1\le j \le [\tf{N-1}2]$. Also, $o(B)=N$ implies $c_N \ge 1$. 
Thus, if $\omega \in \mathcal{U}_N^*$, all the $\varphi(N)$ primitive $N$-th roots of unity $\{\omega^j : (j,N)=1\}$ are roots of 
$p_{_B}(x)$. Since complex roots appear in conjugate pairs, it suffices to consider the angles in $[0,\pi]$. Hence, the rotation angles corresponding to $\mathcal{U}_N^*$ are the $\tf{\varphi(N)}2$ angles $\tf{2j\pi}{N}$ with $(j,N)=1$, $1\le j\le [\tf N2]$, each with multiplicity $c_N$. Similar arguments apply for every $\zeta \in \mathcal{U}_d^*$ with $d\mid N$ and $c_d \ge 1$. 
\end{proof}

As a result, in the case $N=2^r$ we can be more precise.
\begin{coro}\label{coroz2r}
Let $M_\G$ be a $\Z_{2^r}$-manifold of dimension $n$, with $F = \langle B \rangle$. Then, we have
$$p_{_B}(x) = (x-1)^{c_1} \prod_{k=1}^r (x^{2^{k-1}}+1)^{c_{2^k}}$$
with $n=\deg p_{_B} = c_1+ \sum_{k=1}^r  2^{k-1} c_{2^k}$ and $c_1, c_{2^r}\ge 1$.
Also,
\begin{equation}\label{xB2r}
x_B = x(\underbrace{ \tf{\pi}{2^{r-1}}, \tf{3\pi}{2^{r-1}}, \ldots,  \tf{(2^{r-1}-1)\pi}{2^{r-1}} }_{c_{2^r}}, \ldots,
\underbrace{ \tf{\pi}{4}, \tf{3\pi}{4} }_{c_8}, \underbrace{ \tf{\pi}{2} }_{c_4}, \underbrace{ \pi}_{c_2}, 
\underbrace{ 0 }_{c_1-1}).
\end{equation}
\end{coro}
\begin{proof}
First we note that we can assume that $B\in \mathrm{O}(n,\Z)$. In fact, $B \in \mathrm{O}(n)$ and since $\Lambda\simeq \Z^n$ we have $\rho(B) \in \mathrm{GL}(\Lambda) \simeq \mathrm{GL}(\Z^n) \simeq \mathrm{GL}(n,\Z)$ and hence $\det(\rho(B))=\det(B)$ and $p_B(x) = p_{\rho(B)}(x)$.
By \eqref{cyclotomic decomp} and  $x^{2^k}-1=(x^{2^{k-1}}-1)(x^{2^{k-1}}+1)$ we have that 
$\Phi_{2^k}(x) = x^{2^{k-1}}+1$, $k\ge 1$. Since the divisors of $2^r$ are $1,2,2^2,\ldots, 2^r$, by Lemma \ref{lema charpol} with $N=2^r$ we have that $p_{_B}(x)$ equals $(x-1)^{c_1} (x+1)^{c_2} (x^2+1)^{c_4} (x^4+1)^{c_8} \cdots (x^{2^{r-1}}+1)^{c_{2^r}}$
with $c_{2^r}\ge 1$, as we wanted. 

Relative to the angles, all the $j_{i,k}$'s in Proposition \ref{lema angles} are odd, and hence $B$ is conjugate in $T_{2m}$ to 
$x_B = x(t_1,t_2,\ldots,t_m)$ or to $x_B' = x(-t_1,t_2,\ldots,t_m)$ with $x_B$ as in \eqref{xB2r}.
Clearly, we have $n= 2^{r-1} c_{2^r} + \cdots + 4c_8 + 2c_4 + c_2 +(c_1-1) + 1 = \deg p_{_B}$. Since $n_B$ is the multiplicity of 1 as eigenvalue of $B$, and $n_B\ge 1$, we have $c_1\ge 1$.
\end{proof}

\section{The eta function} \label{sec etas}
Let $\G$ be an $n$-dimensional Bieberbach group with translation lattice $\Lambda$ and point group $F$ and  
let $\g=BL_b\in \G$. If $x_B = x(t_1,\ldots,t_m) \in T_{n-1}$
is conjugate to $B$, denote the angles $t_1,\ldots,t_m$ of $B$ by $t_1(x_B), \ldots,t_m(x_B)$ and put 
$$(\Lambda \backslash \G)' =\{ BL_b\in \Lambda \backslash \G : B\in F_1'\}$$ 
where $F_1' = \{B \in F_1 : t_i(x_B) \not \in \pi\Z, \: 1\le i \le m \}$ with $F_1 = \{B \in F : n_B=1\}$. 
If $B\in F_1$, choose $v_B \in \Lambda^*$ such that
\begin{equation}\label{vB}
(\Lambda^*)^B = \Z v_B \qquad \text{and put} \qquad \lambda_{_B}=\|v_B\|. 
\end{equation}
Then, if $o(B)$ denotes the order of $B$, we have (see \cite{MP.JGA11}, Lemma 4.1)
\begin{equation}\label{elegama}
\ell_\gamma := o(B) \langle v_B, b \rangle \in \Z.
\end{equation}

\sk 
We now recall the expressions of $\eta(s)$ for $\Z_N$-manifolds, that will be our starting point. 
For $\alpha \in  (0,1]$ let $\zeta(s,\alpha)=\sum_{n>0} (n+\alpha)^{-1}$ be the Riemann-Hurwitz zeta function for $\mathrm{Re}(s)>1$.

\begin{prop}[\cite{MP.JGA11}, Proposition 2.2] \label{AGAG}
Let $N=2^r$ and assume that $M_\Gamma$ be an orientable $\Z_N$-manifold of dimension $n = 2m+1$, with $m$ odd, and $\Gamma = \langle \gamma , L_\Lambda \rangle$ where $\gamma = BL_b$.
If $r=1$ or $n_B > 1$ then $\eta(s)=0$. On the contrary, if $r\ge 2$ and $n_B =1$ then we have
\begin{equation}  \label{etas cyclic}
\eta(s) = -\sigma_{v_B} \, \tf{2^{m+1}}{N(2 \pi \lambda_{_B})^s}  \sum_{\substack{k=1 \sk \\ \g^k \in (\Ld \backslash \G)' }}^{N-1} \, \tf{1}{o(B^k)^s} \, \Big( \prod_{j=1}^m \sin (k t_j) \Big) \, \sum_{j=1}^{o(B^k)-1} \sin \big( \tfrac{2\pi j k \ell_\gamma}{N} \big) \,  \zeta(s, \tfrac{j}{o(B^k)})
\end{equation}
where $v_B$ and $\ell_\g$ are as in \eqref{vB} and \eqref{elegama} respectively, and 
\begin{equation}  \label{cyclic eta0}
\eta = -\sigma_{v_B}\, \tf{2^{m}}{N} \sum_{k=1}^{N-1} \sum_{\g^k \in (\Ld \backslash \G)'} 
\big( \prod_{j=1}^m \sin (k t_j) \big) \cot(\tf{\pi k \ell_\g}{N}), 
\end{equation}
where $\sigma_{v_B} \in \{\pm 1\}$ is a sign depending on the conjugacy class of $x_B$ in $T_{n-1}$.
\end{prop}
\noindent
\textit{Note.} See \cite{MP.JGA11}, (3.10)--(3.11), for details on the sign $\sigma_{v_B}$.

\sk
We now give the eta function of $\mathcal{D}$ for an arbitrary $\Z_{2^r}$-manifold $M_\G$.
We recall that the class of ${\Z_{2^r}}$-manifolds are not classified; as it is indeed the case for $\Z_p$-manifolds with $p$ an odd prime (\cite{Ch}) or for cfm's in low dimensions (see \cite{HW} for  $\dim =3$, \cite{BB+} for $\dim=4$ and \cite{CS} for $\dim =5,6$). 
However, all the information needed to compute $\eta(s)$ is (roughly) contained in the angles of the rotational part of the elements 
of $\G$. 

\sk 
We will express $\eta(s)$ in terms of a Dirichlet $L$-function. We recall that for $\chi : \Z \rightarrow \C^*$ a Dirichlet character modulo $N$ one has the  Dirichlet series $L(s,\chi) = \sum_{n=1}^\infty \f{\chi(n)}{n^s}$, absolutely convergent for $\mathrm{Re}(s) >1$. It has an analytic continuation to the whole $\C$ given by 
\begin{equation}\label{L&zetas}
L(s,\chi) = \tf{1}{N^s}\sum_{j=1}^N \chi(j) \, \zeta(s, \tf jN) \,.
\end{equation}
We will be mainly concerned with $\chi_4$, the \textit{primitive} character mod $4$, defined by 
$\chi_4(1) = 1$, $\chi_4(3) = -1$ and $\chi_4(0) = \chi_4(2) = 0$, that is 
$\chi_4(n) = \sin(\tf{\pi n}{2})$ for $n\in \Z$.

\msk
The promised result is as follows.
\begin{teo} \label{teo.etasz2r}
Let $M_\G$ be any orientable $n$-dimensional $\Z_{2^r}$-manifold with $r\in \N$, $n \equiv 3$ mod 4 and $\G= \langle \g= BL_b, L_\Lambda \rangle$. 
For $\ell_\g$ as in \eqref{elegama} put $\ell_\g = 2^\nu \ell$ with $\ell$ odd and $\nu \ge 0$. If $n_B>1$ or $r=1$, then $\eta(s)=0$. If $r\ge 2$ and $n_B=1$ then Spec$_\mathcal{D}(M_\G)$ is asymmetric and we have 
\begin{equation}\label{generaletas}
\eta(s) = \tf{\sigma_\g\, 2^{f(B)-1}}{(2^{r-1-\nu} \pi  \lambda_{_B})^{s}} \, L(s,\chi_4)
\end{equation}
with $0\le \nu \le r-2$ and $\sigma_\g = \sigma_{v_B} \, (-1)^{[\f{\ell}{2}]+1} \in \{\pm 1\}$, where $\chi_4$ is the primitive Dirichlet character modulo 4 while $f(B)$ and $\lambda_B$ are as in \eqref{fb} and \eqref{vB} respectively. 
\end{teo}
\begin{proof}
We will apply Proposition \ref{AGAG}. It is known that $\Z_2$-manifolds have symmetric spectrum and hence trivial $\eta(s)$ (\cite{MP.JGA11}, Proposition 3.11, or \S 6.1.1 in \cite{MP.AGAG12}). Also, $\eta(s)=0$ if $n_B>1$. Thus, from now on, we assume that  
$r\ge 2$ and $n_B=1$. Let $n=2m+1$, $m$ odd. 
We now study the ingredients in \eqref{etas cyclic}. The angles are given by \eqref{xB2r}.
Also, $n_B=1$ implies $c_1=1$ and $\g^k \in (\Lambda \backslash \G)'$ if and only if $k$ is odd and $c_2=0$. Thus,
\begin{equation}\label{xB2r0}
x_B = x(\underbrace{\tf{\pi}{2^{r-1}}, \tf{3\pi}{2^{r-1}}, \ldots,  \tf{(2^{r-1}-1)\pi}{2^{r-1}} }_{c_{2^r}}, \ldots, 
\underbrace{\tf{\pi}{8}, \tf{3\pi}{8},  \tf{5\pi}{8}, \tf{7\pi}{8}}_{c_{16}}, \underbrace{\tf{\pi}{4}, \tf{3\pi}{4}}_{c_8}, 
\underbrace{\tf{\pi}{2}}_{c_4}).
\end{equation}
Note that $o(B^k)=2^r$ for every $k$ odd. Relative to the dimension, we have that
\begin{equation}\label{mB}
m = 2^{r-2} c_{2^r} + \cdots + 4 c_{16} + 2 c_8 + c_4.
\end{equation}
By Corollary \ref{coroz2r}, we know that $c_{2^r} \ge 1$. The condition $m$ odd implies that $c_4$ is also odd, and, in particular, $c_4 \ge 1$.

By putting all this information in \eqref{etas cyclic} we get 
\begin{equation} \label{etaux1}
\eta(s) = - \tf{\sigma_{v_B} \, 2^{m+1-r}}{(2^{r+1}\pi \|v_B\|)^s} \, \sum_{k\in I_{2^r}^*}
\, P_k(x_B) \,  \sum_{j=1}^{2^r-1} \sin \big( \tf{\pi jk\ell_\g}{2^{r-1}} \big) \,  \zeta(s,\tf{j}{2^r}) \,,
\end{equation}
where we have used the notations $I_{N}^*=\{1\le i \le N : i \text{ odd}\}$ and $P_k(x_B) = \prod_{j=1}^m \sin(kt_j)$. 
By \eqref{xB2r}, we have
$$ P_k(x_B) = \Bigg( \prod_{\substack{j=1 \sk \\ j \text{ odd}}}^{2^{r-1}} 
\sin (\tf{jk\pi}{2^{r-1}}) \Bigg)^{c_{2^r}}
\Bigg( \prod_{\substack{j=1 \sk \\ j \text{ odd}}}^{2^{r-2}} 
\sin (\tf{jk\pi}{2^{r-2}}) \Bigg)^{c_{2^{r-1}}} \cdots \Bigg( \prod_{\substack{j=1 \sk \\ j \text{ odd}}}^{2^{2}} 
\sin (\tf{jk\pi}{2^{2}}) \Bigg)^{c_8} \big( \sin (\tf{k\pi}{2})\big)^{c_4}.$$
For $r=2$, we have
$P_k(x_B) = (\sin (\tf{k\pi}{2}))^{c_4} = (-1)^{[\tf k2]}$,  since $c_4$ is odd. 
For $r\ge 3$, by Proposition~\ref{prop. sines2} in the Appendix, we get  (this is a first key step)
$$P_k(x_B)  =  (-1)^{[\tf k2]} \, \prod_{i=2}^{r-1} \Bigg( \prod_{j\in I_{2^i}^*}
\sin (\tf{jk\pi}{2^{i}}) \Bigg)^{c_{2^{i+1}}} = 
(-1)^{[\tf k2]} \, \prod_{i=2}^{r-1} \big( \tf{1}{2^{2^{i-1}-1}} \big)^{c_{2^{i+1}}} = (-1)^{[\tf k2]} \, 2^{-S_r},$$
where we have put $S_r := \sum\limits_{i=2}^{r-1} (2^{i-1}-1) \, c_{2^{i+1}}$. 
By \eqref{mB} and  \eqref{fb}, since $c_1=1$, 
$c_2=0$, we have
$$S_r = \sum_{i=2}^{r-1} 2^{i-1} c_{2^{i+1}} - \sum_{i=2}^{r-1} c_{2^{i+1}} = (m-c_4) - (f(B')-c_4) = m-f(B)+1.$$
Therefore, we get
$P_k(x_B) = (-1)^{[\tf k2]} \, 2^{-m + f(B) -1}$ and putting this in \eqref{etaux1} we obtain
\begin{equation}
\label{etaux2}
\eta(s) = -  \tf{\sigma_{v_B} \, 2^{f(B)-r}}{(2^{r+1}\pi \|v_B\|)^s} \, \sum_{k\in I_{2^r}^*}
(-1)^{[\tf k2]}  \underbrace{\sum_{j=1}^{2^r-1} \sin \big( \tf{\pi jk\ell_\g}{2^{r-1}} \big) \, \zeta(s,\tf{j}{2^r})}_{:= \xi_{k,\ell_\g,r}(s)}.
\end{equation}
Now, since $\sin(\tf{(2^r-j)k\pi}{2^{r-1}}) = -\sin(\tf{jk\pi}{2^{r-1}})$ for every $j,k \in \N$, we can write
\begin{eqnarray*}
\xi_{k,\ell_\g,r}(s) &=& 
\sum_{j=1}^{2^{r-1}-1} \sin \big( \tf{\pi jk\ell_\g}{2^{r-1}} \big) \, \big\{ \zeta(s,\tf{j}{2^r}) - \zeta(s,1-\tf{j}{2^r}) \}  \\
& = & \sum_{t=2}^r \sum_{j\in I_{2^{t-1}}^*}
\sin \big( \tf{\pi jk\ell_\g}{2^{t-1}} \big) \, \big\{ \zeta(s,\tf{j}{2^t}) - \zeta(s,1-\tf{j}{2^t}) \},
\end{eqnarray*}
where we have put together the contribution of the angles of the $N$-th roots of unity, $N=2^r$, of the same order $2^t$, $1\le t \le r-1$.

In this way, the sum in \eqref{etaux2} becomes
\begin{eqnarray*}
\sum_{k\in I_{2^r}^*} 
(-1)^{[\tf k2]} \, \xi_{k,\ell_\g,r}(s) &=& \sum_{t=2}^r  \sum_{j\in I_{2^{t-1}}^*} 
\Bigg(  \sum_{k\in I_{2^r}^*} (-1)^{[\tf k2]} \, \sin \big( \tf{\pi jk\ell_\g}{2^{t-1}} \big) \Bigg) \, 
\big\{ \zeta(s,\tf{j}{2^t}) - \zeta(s,1-\tf{j}{2^t}) \big \}.
\end{eqnarray*}
By Proposition \ref{prop. sumsines} in the Appendix, the sum between parentheses equals 
$$\mathcal{S}_{r,t-1}(2^\nu j\ell) = \delta_{t-1,\nu+1}(-1)^{[\f{j\ell}{2}]} 2^{r-1}$$ 
for $t\le r$ (where $\ell_\gamma=2^\nu \ell$ with $\ell$ odd); i.e.\@ it does not vanish only for $t=\nu+2$ and hence $0\le \nu \le r-2$ 
(this is a second key step). 

Thus, since $(-1)^{[\tf{j\ell}{2}]} = (-1)^{[\tf{j}{2}]} (-1)^{[\tf{\ell}{2}]}$, we get 
$$\sum_{k\in I_{2^r}^*} (-1)^{[\tf k2]} \, \xi_{k,\ell_\g,r}(s) =  (-1)^{[\tf{\ell}2]} \, 2^{r-1} \, \sum_{j\in I_{2^{\nu+1}}^*} (-1)^{[\tf j2]}  \, \big\{ \zeta(s,\tf{j}{2^{\nu+2}}) - \zeta(s,1-\tf{j}{2^{\nu+2}}) \big\}.$$
Putting this information in \eqref{etaux2}, we get the expression  
\begin{equation} \label{preetas}
\eta(s) =  \sigma_\g \, \f{ 2^{f(B)-1}}{(2^{r+1}\pi \lambda_{_B})^{s}}  \, \sum_{j \in  I_{ 2^{\nu+1}}^*} (-1)^{[\tf j2]} 
\big( \zeta(s, \tf{j}{2^{\nu+2}}) - \zeta(s,1-\tf{j}{2^{\nu+2}}) \big)
\end{equation}
where $\sigma_\gamma =-\sigma_{v_B} (-1)^{\f{\ell}{2}}$.

\msk 
The last step will be to simplify the above expression. So, denote by $F_\nu(s)$ the function given by the sum in \eqref{preetas}, i.e.\@ 
$$F_\nu(s) = \sum_{j \in  I_{ 2^{\nu+1}}^*} (-1)^{[\tf j2]} 
\big( \zeta(s, \tf{j}{2^{\nu+2}}) - \zeta(s,1-\tf{j}{2^{\nu+2}}) \big).$$
Note that $L(s,\chi_4)=\tf{1}{4^s} \big( \zeta(s,\tf14) - \zeta(s,\tf34) \big)$ by \eqref{L&zetas}.
Thus, $F_0(s)= 4^s L(s,\chi_4)$ and hence \eqref{generaletas} holds for $\nu=0$ (i.e.\@ $t=2$).

We now consider the remaining cases, that is $\nu>0$ (i.e.\@ $t\ge 3$). Suppose $\chi$ is a Dirichlet character mod $k$. Then, there is some primitive Dirichlet character $\psi$ mod $d$, with $d \mid k$ (the conductor), such that $\chi=\psi \cdot  \chi_{1,k}$, where 
$\chi_{1,k}$ is the principal Dirichlet character mod $k$, and 
\begin{equation}\label{L-ind}
L(s,\chi) = L(s,\psi) \prod_{p\mid k} \Big( 1 - \tf{\chi(p)}{p^s} \Big).
\end{equation}
Now, for $t \ge 3$, let $\chi_{2^t}$ be the real Dirichlet character mod $k=2^t$ induced by the primitive character $\chi_4$, that is 
$\chi_{2^t}=\chi_4 \cdot \chi_{1,2^t}$. Thus, $\chi_{2^t}(2j)=0$ and $\chi_{2^t}(2j+1)=(-1)^{[\f j2]}$.

On one hand, since $k=2^t$, by \eqref{L-ind} we get
\begin{equation}\label{L1}
L(s,\chi_{2^t}) = L(s,\chi_4) (1 - \tf{\chi_4(2)}{2^s}) = L(s,\chi_4).
\end{equation}
On the other hand, by \eqref{L&zetas} we have $L(s,\chi_{2^t}) = \tf{1}{2^{ts}} \sum_{j\in I_{2^{t}}^* } \chi(j) \zeta(s,\tf{j}{2^t})$. Thus, since $\chi_{2^t}(2^t-j) = \chi_{2^t}(-1)\chi_{2^t}(j)=-\chi_{2^t}(j)$, we obtain
\begin{equation} \label{L2} 
\begin{split}
L(s,\chi_{2^t}) = \tf{1}{(2^t)^s} \sum_{j\in I_{2^{t-1}}^* } (-1)^{[\f j2]} \, \big( \zeta(s,\tf{j}{2^t}) - \zeta(s,1-\tf{j}{2^t}) \big) 
= \tf{1}{(2^t)^s}  F_{t-2}(s).
\end{split}
\end{equation}
From \eqref{L1} and \eqref{L2}, we have 
$$F_\nu(s) = 2^{\nu s} L(s,\chi_{2^{\nu+2}}) = 2^{t s} L(s,\chi_4),$$ 
with $t=\nu+2\le r$. Putting this in \eqref{preetas} we get \eqref{generaletas} for any $0\le \nu \le r-2$.

\sk
Finally, note that by \eqref{Eta(s)} we have 
$\eta(s) = \sum_{\lambda>0} (d_\lambda^+ - d_{\lambda}^-) \lambda^{-s}$, 
where $d_{\ld}^{\pm}$ is the multiplicity of $\pm \ld$. Therefore, symmetry in the spectrum implies $\eta(s) = 0$ for every $s$, a contradiction. Thus the spectrum is asymmetric and the result follows.
\end{proof}

Notice that, by \eqref{etas cyclic}, the dependence of $\eta(s)$ on the metric is given only by 
$\nu \in \N_0$ (with $\ell_\g = 
2^\nu \ell$, $\ell$ odd) and $\lambda_B=\|v_B\|\in \R$ where $(\Ld^*)^B = \Z v_B$ and $\g=BL_b$ is the generator of 
$\Ld\backslash \G$.

\begin{rem}
Expression \eqref{generaletas} for $\eta(s)$ in Theorem \ref{teo.etasz2r} is very simple, compare with the general expression \eqref{etas cyclic}. By \eqref{etaux2}, a priori, all the functions $\zeta(s,\tf{1}{2^t}), \zeta(s,\tf{3}{2^t}), \ldots, \zeta(s,\tf{2^t-1}{2^t})$ should appear in $\eta(s)$, for every $2\le t \le r$. However, as the proof of the theorem shows, great cancellations take place and Hurwitz zeta functions for only one $t$, namely
$\zeta(s,\tf{j}{2^{\nu+2}})$ for odd $j=1,3,5, \ldots, 2^{\nu+2}-1$, contribute to $\eta(s)$. 
\end{rem}

\begin{rem}\label{reti}
Let $M_\G$ be an orientable $\Z_{2^r}$-manifold, $r\ge 2$, of dimension $n=2m+1$, $m$ odd, with $\G= \langle BL_b, L_\Lambda \rangle$ and $n_B=1$. Suppose $B=\mathrm{diag}(B',1)$ with $B'\in \mathrm{SO}(n-1)$ and 
$\Lambda = \Ld' \bigoplus^\perp \Z e_n$
with $\Ld'$ a $B'$-stable lattice in $\R^{n-1}$. In this case, $\ell_\g=\ell$ is odd, i.e.\@ $\nu=0$, for if not $\G$ would have torsion elements other than the identity. This is the case, for instance, for the tetracosm $M_1$ (see \eqref{tetracosmo}), the $\Z_{2^r}$-manifolds in family $\mathcal{F}$ (see Section \ref{sec.family}) and the exceptional $\Z_p$-manifolds, $p$ odd prime (see \cite{GMP}, Proposition 2.2). This is not the case in general, as Example \ref{nonstandard} below shows.
\end{rem}

\begin{rem} \label{rem  35} 
By (the proof of) Theorem \ref{teo.etasz2r} and its previous results, the eta function of an $n$-dimensional $\Z_{2^r}$-manifold 
($n=4h-1$) is non-trivial if and only if $r\ge 2$ and $c_1=1$, $c_2=0$, $c_{2^r}, c_4\ge 1$ with $c_4$ odd. 
Hence, by \eqref{eq. charpol},
$p_{_B}(x) = \Phi_1(x) \Phi_4 (x)^{c_4} \Phi_{2^r}(x)^{c_{2^r}} g(x)$ 
where $g(x) =\prod_{i=3}^{r-1} \Phi_{2^i}(x)^{c_{2^i}}$ with $c_{2^i}\ge 0$, $i=3,\ldots,r-1$. 
\end{rem}

\begin{defi} \label{def nr}
For any fixed $r\ge 2$, let $n_r$ be the \emph{minimal dimension} for a $\Z_{2^r}$-manifold having non-trivial eta function. 
\end{defi}

Thus, $n_2=3$ for $\Z_4$-manifolds and for $r\ge 3$ we have 
\begin{equation}\label{nr}
n_r = \min\limits_{B\in G_{n,r}^*} \{ \deg p_{_B} \} = 2^{r-1}+3, \qquad r\ge 3,
\end{equation}
where $G_{n,r}^* = \{B\in \mathrm{GL}(n) : o(B) = 2^r \text{ and $B$ has no $(-1)$-eigenvalues} \}$, 
corresponding to the decomposition $p_{_B}(x) = \Phi_1(x) \Phi_4 (x) \Phi_{2^r}(x) = (x-1)(x^2+1)(x^{2^{r-1}}+1)$ in the above remark,
that is  $g(x)=1$ (i.e.\@ $c_8=c_{16}=\cdots = c_{2^{r-1}} =0$). 
In Section 5 we will show that $\Z_{2^r}$-manifolds having non-trivial eta function exist for every $r\ge 2$ and in every dimension $n\ge n_r+4k$, $k\ge0$.

\section{Eta invariant and examples}
As a direct consequence of Theorem \ref{teo.etasz2r} we can now obtain the $\eta$-invariant of any $n$-dimensional $\Z_{2^r}$-manifold. It is, up to a sign, a positive power of 2 which does not depend on $n$ nor on $r$, but on the nature of the integral holonomy representation only.

\begin{prop} \label{prop eta0}
Let $M_\G$ be a $\Z_{2^r}$-manifold of $\dim n\equiv 3$ mod 4 with $F= \langle B \rangle$ and put $\ell_\g=2^\nu \ell$ with $0\le \nu \le r-2$ and $\ell$ odd. If $n_B>1$ or $r=1$, then $\eta=0$. If $n_B=1$ and $r\ge 2$, then
\begin{equation}\label{generaleta0}
\eta(M_\G) = \sigma_\g \, 2^{f(B)-2} \in \Z,
\end{equation}
where $\sigma_\g=-\sigma_{v_B} \, (-1)^{[\f{\ell}{2}]} \in \{\pm 1\}$ 
and $f(B)$ is defined in \eqref{fb}. 
\end{prop}
\begin{proof}
By \eqref{generaletas}, 
using that $\zeta(0,a) = \tf12 -a$, we simply get
$$\eta(M_\G) = \eta(0) = \sigma_\g \, 2^{f(B)-1} \big\{ (\tf 12 - \tf 14) - (\tf12-\tf 34) \big\} = \sigma_\g \, 2^{f(B)-2}.$$ 
We have $c_{2^r}, c_4 \ge 1$, by the comments after \eqref{mB}, and hence $f(B)\ge 2$. Thus, $\eta(M_\G) \in \Z$, and the result follows.
\end{proof}

\begin{rem}
(i) Expression \eqref{generaleta0} is very simple (compare with the general formula \eqref{cyclic eta0}). 

(ii) The $\eta$-invariant is, up to sign, determined by $f(B)$. To compute this number, one only needs to know the integral holonomy representation. However, to determine the sign, one needs to know $\g$ and $\Ld$ explicitly. 

(iii) Clearly,  $\eta(s)$ determines the $\eta$-invariant. The converse is not true in general, because of the dependence on $\nu$ and $\lambda_B$ in \eqref{generaleta0}. However, $\eta(s)=0$ if and only if $\eta=0$.

(iv) By \eqref{generaleta0}, the \textit{reduced eta invariant} $\bar \eta = \tf12 (\eta+d_0)$ mod $\Z$ of any $\Z_{2^r}$-manifold is 0 or $\tf12$. Also, $d_0 = \dim \ker \mathcal{D}$ can be computed by using Theorem 3.5 and Proposition~3.7 in \cite{MP.JGA11}. 
\end{rem}

In dimension 3, there are $\Z_{2^r}$-manifolds for $r=1,2$ only. Up to diffeomorphism, there are three $\Z_2$-manifolds and there is only one $\Z_4$-manifold, given by
\begin{equation}\label{tetracosmo}
M_1 = \G \backslash \R^3, \quad \G = \langle \g = BL_{\tf{e_3}{4}}, L_{\Z^3}\rangle, \quad 
B = \mathrm{diag}(J_1,1), \quad J_1=\left( \begin{smallmatrix} & -1 \\ 1 & \end{smallmatrix} \right). 
\end{equation}
This manifold, known as the \emph{tetracosm} after \cite{RC}, gives a nice example in spectral geometry, being one of the `spectral twins', i.e.\@ the only two isospectral-on-functions and non-isometric compact 3-manifolds (\cite{DR}).

\begin{coro} \label{coro eta0}
Let $M_\G$ be a $\Z_{2^r}$-manifold of dimension $n\equiv 3$ mod 4. If $n \ge 7$ then $\eta \in 2\Z$. If $n=3$ then  
$\eta = 0$ for $\Z_2$-manifolds and $\eta = \pm 1$ for $\Z_4$-manifolds. 
\end{coro}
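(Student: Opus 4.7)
The plan is to apply Proposition \ref{prop eta0} directly and reduce the claim to a short case analysis on the factorization of $p_{_B}(x)$. The trivial cases $r=1$ or $n_B>1$ give $\eta=0\in 2\Z$ immediately, so I may restrict to $r\ge 2$ and $n_B=1$, where $\eta(M_\G) = \sigma_\g\, 2^{f(B)-2}$. The claim $\eta\in 2\Z$ for $n\ge 7$ then amounts to showing $f(B)\ge 3$, and the claim in dimension $3$ amounts to isolating the single case where $f(B)=2$.

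Using Remark \ref{rem 35}, in the non-trivial regime $p_{_B}(x) = (x-1)\,\Phi_4(x)^{c_4}\,\Phi_{2^r}(x)^{c_{2^r}}\,\prod_{i=3}^{r-1}\Phi_{2^i}(x)^{c_{2^i}}$ with $c_4\ge 1$ odd (parity forced by $n\equiv 3\pmod 4$ via \eqref{mB}) and $c_{2^r}\ge 1$. I would split on $r$. For $r\ge 3$ the cyclotomic indices $4$ and $2^r$ are distinct, so $f(B)\ge 1+c_4+c_{2^r}\ge 3$, giving $\eta\in 2\Z$; moreover such manifolds already satisfy $n\ge n_r=2^{r-1}+3\ge 7$ by \eqref{nr}, compatible with the hypothesis. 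For $r=2$ one has $f(B)=1+c_4$ and $n=1+2c_4$ with $c_4$ odd. Hence $c_4=1$ forces $n=3$ with $f(B)=2$ and $\eta=\pm 1$, while $c_4\ge 3$ gives $n\ge 7$ and $f(B)\ge 4$, so again $\eta\in 2\Z$.

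Finally, for the dimension-$3$ statement I would observe that $r\ge 3$ is impossible: $c_{2^r}\ge 1$ with $r\ge 3$ forces $\deg p_{_B}\ge 2^{r-1}\ge 4$. Thus the only $\Z_{2^r}$-manifolds of dimension $3$ are the $\Z_2$-manifolds (for which $\eta=0$ by Proposition \ref{prop eta0}) and the unique $\Z_4$-manifold, the tetracosm $M_1$ of \eqref{tetracosmo}, for which the above calculation yields $\eta=\pm 1$. There is no real obstacle here; the argument is essentially bookkeeping with the cyclotomic factorization once Proposition \ref{prop eta0} is in hand. The only minor care required is tracking the parity and positivity conditions on the $c_{2^i}$'s forced by non-triviality of $\eta(s)$, all of which are already encoded in Remark \ref{rem 35} and in the proof of Theorem \ref{teo.etasz2r}.
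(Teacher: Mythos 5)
Your proof is correct and follows essentially the same route as the paper: reduce via Proposition \ref{prop eta0} to showing $f(B)\ge 3$ whenever $\eta\ne 0$ and $n\ge 7$, and isolate $f(B)=2$ as the case $r=2$, $c_4=1$, i.e.\ the $3$-dimensional $\Z_4$-manifold. The only (harmless) difference is that for $n=3$ you read $\eta=\pm 1$ directly from $\eta=\sigma_\g\,2^{f(B)-2}$, whereas the paper identifies the manifold with the tetracosm and quotes the known value $\eta(M_1)=-1$ together with the fact that $\eta$ is preserved up to sign by diffeomorphisms.
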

\begin{proof}
We know that $f(B)\ge 2$ and, by \eqref{generaleta0}, $\eta(M_\G) \in 2\Z$ if and only if $f(B)\ge 3$. By Corollary~\ref{coroz2r} we have $c_{2^r}, c_1\ge 1$. By Remark \ref{rem 35}, $\eta\ne 0$ if and only if $r\ge 2$ and $c_1=0$, $c_2=0$, $c_4,c_{2^r}\ge 1$.
Thus, in dimension 7, if $\eta \ne 0$, we have $c_1, c_4, c_{2^r}\ge 1$ with $r>2$, and hence $\eta\in 2\Z$.
On the other hand, $f(B)=2$ if and only if $r=2$ and $c_4=1$ (i.e.\@ $r=2$, $m=1$) and thus $M_\G$ is a $3$-dimensional $\Z_4$-manifold. 
Thus, $M_\G$ is diffeomorphic to the tetracosm $M_1$ defined in \eqref{tetracosmo}. It is known that $\eta(M_1) = -1$ (see \cite[\S 3.1]{MP.AGAG12}). 
Since the $\eta$-invariant is preserved by diffeomorphisms up to sign, and this sign changes with a change of orientation (see \cite{APS1}), the result readily follows.
\end{proof}

Next, we will illustrate with two examples the method for computing $\eta$-invariants given by Proposition \ref{prop eta0}.  
\begin{ejem}[\emph{tetracosm}]
\label{tetracosm}
We now compute the $\eta(s)$ and $\eta$-invariant of $M_1$ (see \eqref{tetracosmo}).

\sk (i)  Note that $p_{_B}(x) = (x-1)(x^2+1)$, hence $f(B)=2$ and 
$\eta(M_1) = -\sigma_B(-1)^{[\ell_\g/2]}$. Also, $v_B=e_3$ and $x_B=x(\tf{\pi}{2})$, then $\sigma_B=1$ and $\ell_\g=1$. 
Thus $\eta(M_1)=-1$ and 
$$\eta(s) = \tf{-2}{(8\pi)^s} \, \{\zeta(s,\tf14)-\zeta(s,\tf34)\}.$$

\sk (ii) Suppose now that we take $\g^3 = B^3L_{\f34 e_3}$ as the generator of $\G$. Since 
$B^3 = \mathrm{diag}(-J_1,1)$, we have $x_{B^3} = x(-\tf{\pi}{2})$, hence $\sigma_{B^3} = -\sigma_B=-1$, and  $\ell_{\g^3}=3$. Thus, 
we also get $\eta(M_1)= -1$. On the other hand, let $M_1' = \G' \backslash \R^3$, with $\G' = \langle \g' = B^3 L_{\f14 e_3}, L_{\Z^3}\rangle$. 
We have $\sigma_{B^3}=-1$ and $\ell_\g = 1$ and thus $\eta(M_1')= 1$. Taking $(\g')^3=(B^3)^3L_{\f34 e_3}=BL_{\f34 e_3}$ as the generator of $\G'$, we also get $\eta(M_1')=1$.

\sk (iii) It is well known that any diffeomorphism between cfm's is given by conjugation of the corresponding Bieberbach groups by an element in the affine group. Suppose that $C(BL_b)C^{-1} = B'L_b$ with $C\in \mathrm{GL}(3)$. Then, $CBC^{-1}L_{Cb}= B^3L_b$, i.e.\@ $CBC^{-1}=B^3$ and $Cb=b$. Then, one can take $C = \mathrm{diag}(1,-1,1)$ and thus
$\varphi_{_C} (M_1) = C \G C^{-1} \backslash \R^3 = \G' \backslash \R^3 = M_1'$.
Since $\det C = -1$, $\varphi_{_C}$ is an orientation reversing diffeomorphism between $M_1$ and $M_1'$, and $M_1' = M_1^-$, the tetracosm with the opposite orientation. We saw in (i) that $\eta(M_1)=-\eta(M_1^-)$.
\end{ejem}

In Section \ref{sec.family} we will define a family of $\Z_{2^r}$-manifolds (with $\eta \ne 0$) having integral holonomy representations of a special kind (see \eqref{Cr}, \eqref{Bj}), that we will refer to as \textit{regular $\Z_{2^r}$-representations}. 
It is difficult, in general, to construct non-regular $\Z_{2^r}$-representations. One way to do that, is to look up at $\Z_{2^r}$-manifolds in the classification of low dimensional cfm's ($\dim \le 7$) in \textsc{carat} \cite{carat}, and assemble some different representations together (taking some care with the translation lattices). However, the resulting associated $\Z_{2^r}$-manifold will have $\eta=0$, in general. 
In the next example we construct a $\Z_8$-manifold having non-regular integral representation with $\eta \ne 0$. 

\begin{ejem}[\textit{$\Z_8$-manifold, integral holonomy representation, $\eta \ne 0$}]
\label{nonstandard}
Consider the matrix $\tilde B = \mathrm{diag}(K,J_1) \in \mathrm{SL}(5,\Z)\times \mathrm{SO}(2,\Z) \subset \mathrm{GL}(7,\Z) $, where
$$K = {\tiny \left( \begin{array}{rrrrr}
              1 & 0 & 0 & 1 & 0 \\
              0 & 0 & 1 & 0 & 0 \\
              0 & 0 & 0 & -1& 0 \\
             -1 & 0 & 0 & 0 &-1 \\
             -1 &-1 & 0 & -1& 0
\end{array} \right)} \qquad \text{and} \qquad J_1 = {\scriptsize \left( \begin{array}{rr} 0&-1\\ 1&0 \end{array} \right)}.$$
It is immediate to check that $K$ has order 8, with eigenvalues $\pm e^{\f{\pi i}{4}}, \pm e^{\f{3\pi i}{4}}, 1$ and that $J_1$ has order 4 with eigenvalues $\pm i$. Take the lattice $\Ld = \Ld_5 \oplus \Ld_2 \subset \R^7$, where $\Ld_5 = \Z f_1 \oplus \cdots \oplus \Z f_5$ is $K$-stable and $\Ld_2 = \Z e_6 \oplus \Z e_7$. Also, one checks that $n_{\tilde B} = n_K=1$ with 
$\Lambda^{\tilde B} = \Lambda^K = \Z (f_1-f_5)$. 
We claim that 
$$\tilde \G =  \langle \tilde \g = \tilde B L_{\f12 f_1}, L_{\Ld}\rangle$$ 
is a discrete cocompact torsion-free subgroup of $\mathrm{Af{}f}(\R^7)$. 
In fact, the lattice $\Ld_5$ exists, since $K$ appears as a subrepresentation of the 6-dimensional Bieberbach group with point group 
$\Z_8$ given by the $\Z$-class labelled 468.1.2 in \cite{carat}. 
Also, by looking at the Bieberbach group with holonomy group $\Z_2 \times \Z_8$ given by the $\Z$-class labeled 4407.1.3, 
one deduces that $\tfrac {1}{2} f_1$ can be used as translation vector for $K$ (and $\tilde B$), hence giving rise to a torsion-free group.

Since $\tilde B$ is conjugate in $\mathrm{GL}(7)$ to
$B = \mathrm{diag}(x(\tf{\pi}{4}),x(\tf{3\pi}{4}),x(\tf{\pi}{2}),1) \in T_7 \subset \mathrm{SO}(7)$ (or to $B' = \mathrm{diag}(x(\tf{-\pi}{4}),x(\tf{3\pi}{4}),x(\tf{\pi}{2}),1)$, hence $\sigma_{v_B} = 1$ or $-1$, respectively), there exists $C\in \mathrm{GL}(7)$ such that $C\tilde B C^{-1}= B$. In this way, we have that
$$   \G = C\tilde \G C^{-1} =  \langle \g =  B L_{\f{1}{2}C f_1}, L_{C\Ld} \rangle \subset \mathrm{I}^+(\R^7) \,,$$
and hence $\G$ is a 7-dimensional Bieberbach group. It is thus clear that $M = \G \backslash \R^7$ is an orientable $\Z_8$-manifold having integral holonomy representation given by the matrix $B$.

Now, one has that $f(B)=3$, since 
$$p_B(x) = p_K(x) p_{J_1}(x) = \big((x-1)(x^4+1)\big) (x^2+1) = \Phi_1(x)\Phi_4(x)\Phi_8(x),$$ 
and thus $\eta(M) = \pm 2$, by \eqref{generaleta0}. The sign $\sigma_\g=\pm 1$ can be determined provided one knows $C$ and $\Ld_5$ explicitly. 
\end{ejem}

\begin{rem}
In \cite{Sz2}, by using results in \cite{Do}, the $\eta$-invariants of 7-dimensional cfm's $M$ having cyclic holonomy group with a special holonomy representation are computed. 
The expression for $\eta(M)$ involves sums of products of cotangents at special angles. 
For such $M$, Theorem 1 claims that $\eta(M)\in \Z$. However, the integrality of $\eta$ comes out after computations with the software 
`Mathematica'. 
There are 126 non-diffeomorphic $\Z_{2^r}$-manifolds in this family (roughly $\tf 13$ of the total), involving only $r=1,2,3$. 
Our Proposition~\ref{prop eta0} assures that indeed $\eta \in \Z$ for these $\Z_{2^r}$-manifolds, and also allows one to compute the $\eta$-invariant in all the cases not covered by the mentioned theorem (see the table before Example 2 in \cite{Sz2}). 
\end{rem}

\section{A distinguished family of $\Z_{2^r}$-manifolds} \label{sec.family}
\subsection{The family $\mathcal{F}$}
For any $r \in \N$, we will construct an infinite family of orientable $\Z_{2^r}$-manifolds in dimensions $n=4h-1$, $h\ge 1$,  
each having holonomy group $F \subset \mathrm{SO}(n,\Z)$. 
For $r\in \N$, let $I_r$ be the $r \times r$ identity matrix and put  
\begin{equation}\label{Cr}
C_r = \left( \begin{smallmatrix} & -1 \\ I_{2^r-1} & \end{smallmatrix} \right)
\qquad \text{and} \qquad
J_r = \left( \begin{smallmatrix} & J_{r-1} \\ I_{2^r} & \end{smallmatrix} \right)
\end{equation}
where $J_0 = (-1)$. Thus, for instance, $C_1 = \left( \begin{smallmatrix} &-1 \\ I_1& \end{smallmatrix} \right) = J_1$, $J_2=\left( \begin{smallmatrix} &J_1 \\ I_2& \end{smallmatrix} \right)$ and {\small $J_3=\left( \begin{smallmatrix} &&J_1 \\ &I_2& \\ I_4&& \end{smallmatrix} \right)$}. It is easy to check that $C_r, J_r \in \mathrm{SO}(2^r,\Z)$.
For instance, for $r\le 3$ we have

{\small 
\begin{center}
\renewcommand{\arraystretch}{1.5}
\begin{tabular}{c|c|c|c|c}
  $r$ & $C_r$ & $J_r$ & order & size  \\ \hline
  1 & $\left( \begin{smallmatrix} & -1 \\ 1 & \end{smallmatrix} \right)$  & $\left( \begin{smallmatrix} & -1 \\ 1 & \end{smallmatrix} \right)$  & 4 & 2 \\ 
	  2 & ${\scriptsize \left( \begin{smallmatrix} &&& -1 \\ 1&&& \\ &1&& \\ &&1& \end{smallmatrix} \right)}$ & ${\scriptsize \left( \begin{smallmatrix} &&& -1 \\ &&1& \\ 1&&& \\ &1&& \end{smallmatrix} \right) }$ 
& 8 & 4 \\ 
  3 & $\tiny{ \left( \begin{smallmatrix} &&&&&&& -1 \\ 1&&&&&&& \\ &1&&&&&& \\ &&1&&&&& \\ &&&1&&&& \\ &&&&1&&& \\ &&&&&1&&  \\ &&&&&&1& \end{smallmatrix} \right) } $ & $\tiny{ \left( \begin{smallmatrix} &&&&&&& -1 \\ &&&&&&1& \\ &&&&1&&& \\ &&&&&1&& \\ 1&&&&&&& \\ &1&&&&&& \\ &&1&&&&&  \\ &&&1&&&& \end{smallmatrix} \right)}$ 
	 & 16 & 8 
\end{tabular}
\end{center}}

Since $C_r$ is the companion matrix of the cyclotomic polynomial $\Phi_{2^{r+1}}(x) = x^{2^{r}}+1$, it has order $o(C_r)=2^{r+1}$ and its eigenvalues are the primitive $2^{r+1}$-th roots of unity. Similarly, one can check that $J_r$ has order $2^{r+1}$ and is conjugate to $C_r$ in $\mathrm{GL}(2^r,\R)$, hence with the same eigenvalues as $C_r$.
Thus, the rotation angles for both $C_r$ and $J_r$ are 
$$\tf{\pi}{2^r}, \tf{3\pi}{2^r}, \tf{5\pi}{2^r}, \ldots, \tf{(2^r-1)\pi}{2^r} \,.$$ 
Since $C_r$ and $J_r$ do not have $\pm 1$-eigenvalues, we have $n_{\pm C_r} = n_{\pm J_r} = 0$, for every $r\ge 2$.

\sk
For any integer $r \ge 2$ and $j_{r-1}, \ldots, j_1 \in \N_0$, with $j_{r-1}>0$, we put $j(r)=(j_{r-1},\ldots,j_2,j_1)$ and define the matrices
\begin{gather}
\begin{aligned} \label{Bj}
B_{{j}(r)} & =
\mathrm{diag} \big( \underbrace{C_{r-1},\ldots, C_{r-1}}_{j_{r-1}}, \ldots, \underbrace{C_1,\ldots, C_1}_{j_1}, 1 \big), \sk \\
B_{{j}(r)}' & = \mathrm{diag} \big( \underbrace{J_{r-1},\ldots, J_{r-1}}_{ j_{r-1}}, \ldots, \underbrace{J_1,\ldots, J_1}_{j_1}, 1 \big).
\end{aligned}
\end{gather}
It is clear that $B_{j(r)}$ and $B_{j(r)}'$ have order $2^{r}$ and belong to $\mathrm{SO}(n,\Z)$, where
\begin{equation}\label{dimension}
n = 2^{r-1} j_{r-1} + \cdots + 4j_2 + 2j_1  + 1 = 2(2^{r-2} j_{r-1} + \cdots + 2j_2 + j_1) + 1 \,.
\end{equation}
Thus, $n=2m+1$ with $m = 2^{r-2} j_{r-1} + \cdots + 2j_2 + j_1$, and $m$ is odd if and only if $j_1$ is odd.

More generally, for each $r, n$ and $j_{r-1}, \ldots,j_1$ satisfying \eqref{dimension} we can take pairs $k_i, k_i' \in \N_0$ such that $k_i+k_i'=j_i$ for $i=1,\ldots, r-1$ and define 
\begin{equation}\label{Bkk'}
B_{k(r)} = \mathrm{diag} \big( \underbrace{C_{r-1},\ldots, C_{r-1}}_{k_{r-1}},  \underbrace{J_{r-1},\ldots, J_{r-1}}_{k'_{r-1}}, \ldots,
\underbrace{C_1,\ldots, C_1}_{k_1}, \underbrace{J_1,\ldots, J_1}_{k_1'}, 1 \big)\,.
\end{equation}
Since $C_1=J_1$, for simplicity we will take $k_1=j_1$ and $k_1'=0$.
In this way, $B_{k(r)} = B_{j(r)}$ if $k(r) = (j_{r-1},0,\ldots,j_2,0,j_1)$ and 
$B_{k(r)} = B_{j(r)}'$ if $k(r) = (0,j_{r-1},\ldots,0,j_2,j_1)$.
Also, $n_{B}=1$ and $x_B$ is as given in \eqref{xB2r0} with $c_{2^i+1}=j_i$, $i=1,\ldots,r-1$, that is
\begin{equation}\label{xjr}
x_B = x_{j_{r-1},\ldots,j_1} = x(\underbrace{ \tf{\pi}{2^{r-1}}, \tf{3\pi}{2^{r-1}}, \ldots,  \tf{(2^{r-1}-1)\pi}{2^{r-1}} }_{j_{r-1}}, 
\ldots, \underbrace{ \tf{\pi}{8}, \tf{3\pi}{8},  \tf{5\pi}{8}, \tf{7\pi}{8} }_{j_3}, \underbrace{ \tf{\pi}{4}, \tf{3\pi}{4} }_{j_2}, \underbrace{ \tf{\pi}{2} }_{j_1}).
\end{equation}

Define the Bieberbach groups
\begin{equation}\label{Z2r group}
    \Gamma_{k(r)} := \langle \g= B_{k(r)} L_{b_r}, L_{\Ld} \rangle, \qquad b_r = \tf{1}{2^r} e_n
\end{equation} 
where $j_1$ is odd and $\Ld = \Z e_1 \oplus \cdots \oplus \Z e_n$ is the canonical lattice in $\R^n$. Since $B_{k(r)} \in \mathrm{SO}(n)$ and $F = \langle B_{k(r)} \rangle \simeq \Z_{2^r}$, we have the associated orientable $\Z_{2^r}$-manifold
\begin{equation}\label{Z2r man}
    M_{k(r)} := \Gamma_{k(r)} \backslash \R^n 
\end{equation}
of dimension $n=2m+1 \equiv 3 \mod 4$.

\begin{defi} \label{defi f}
For a fixed $r$, let $\mathcal{F}_{r}$ denote the set of all $\Z_{2^r}$-manifolds as in \eqref{Z2r man} and let
$\mathcal{F} = \bigcup_{r =1}^\infty \mathcal{F}_{r}.$ 
Also, put $\mathcal{F}(n) = \{M \in \mathcal{F} : \dim M = n \}$ and $\mathcal{F}_r(n)
= \mathcal{F}_r\cap \mathcal{F}(n)$.
\end{defi} 
Then, we have that $\mathcal{F}(n)$ consists of $\Z_{2^r}$-manifolds with $1 \le r \le t= \lceil \log_2 n \rceil $. In other words, if $2^{t-1} < n \le 2^{t}-1$,
$$\mathcal{F}(n) = \mathcal{F}_1(n) \cup \mathcal{F}_2(n) \cup \cdots \cup \mathcal{F}_t(n).$$ 

\sk 
Moreover, for any fixed $r$, the number of $\Z_{2^r}$-manifolds in dimension $n$ of the form $M_{j(r)}$ equals the number of partitions 
of $n$ into the first $r$-powers of two, i.e., $2^t$ with $0\le t \le r-1 $. This number is known as the \textsl{binary partition function} and is denoted by $b(n)$. Since $j_1 \ge 1$, we have that $\#\mathcal{F}_r(n) \ge b(n-3)$. Mahler showed that the logarithm of $b(n)$ grows like $(\log n)^2/2\log 2$ as $n$ grows to infinity (\cite{Ma}, see also \cite{Pe}).
Thus, asymptotically, we have 
$$\#\mathcal{F}_r(n) \sim (n-3)^{\sqrt{\log_2(n-3)}}.$$   

On the other hand note that, for a given $r$, the minimal dimension for $M\in \mathcal{F}_r$ is $n=2^{r-1}+3$ for $r\ge 3$ 
(since $j_{r-1},j_1\ge1$ and $j_i=0$ for $2\le i\le r-2$) and $n=3$ for $r=2$ (since $j_{r-1}=j_1$ in this case). Thus, 
$\min \{\dim M : M\in \mathcal{F}_r\} = n_r$ (see \eqref{nr}).

\begin{ejem} \label{tetra}
We now describe the manifolds in $\mathcal{F}$ in the lowest dimensions. There is only one $3$-manifold in $\mathcal{F}$, 
the tetracosm $M_1$ in \eqref{tetracosmo}. 
In dimension $7$, there are 3 manifolds in $\mathcal{F}$; two $\Z_8$-manifolds $M_{1,0,1}, M_{0,1,1}$, determined by 
$\mathrm{diag}(J_2,J_1,1)$, $\mathrm{diag}(C_2,J_1,1)$, and the $\Z_4$-manifold $M_3$ given by 
$\mathrm{diag}(J_1,J_1,J_1,1)$. In dimension $11$, we have 8 manifolds in $\mathcal{F}$; two $\Z_{16}$-manifolds $M_{1,0,0,0,1}$ and  $M_{0,1,0,0,1}$, given by the matrices $\mathrm{diag}(C_3,J_1,1)$ and $\mathrm{diag}(J_3,J_1,1)$; five $\Z_8$-manifolds 
$M_{2,0,1}$, $M_{1,1,1}$, $M_{0,2,1}$, $M_{1,0,3}$ and $M_{0,1,3}$ given respectively by $\mathrm{diag}(C_2,C_2,J_1,1)$, 
$\mathrm{diag}(C_2,J_2,J_1,1)$, $\mathrm{diag}(J_2,J_2,J_1,1)$, $\mathrm{diag}(C_2,J_1,J_1,J_1,1)$ and $\mathrm{diag}(J_2,J_1,J_1,J_1,1)$; and the $\Z_4$-manifold $M_5$ determined by $\mathrm{diag}(J_1,J_1,J_1,J_1,J_1,1)$.
\end{ejem}

\subsection{Homology and $\eta$-invariants}
We will now compute the first integral homology and cohomology groups for all the manifolds in $\mathcal{F}$, showing that $H_1(M,\Z)$ has only $2$-torsion.
\begin{prop} \label{teoH1}
Let $M=M_{k(r)} \in \mathcal{F}$. Then
\begin{equation}\label{H1}
H_1(M,\Z) \simeq \Z \oplus \Z_2^{j_{r-1} + \cdots + j_1} 
\end{equation}
with $j_i=k_i+k_i'$ for $1\le i \le r-1$, and
\begin{equation}\label{H^1}
H^1(M,\Z) \simeq \Z, \qquad H^1(M,\Z_2) \simeq \Z_2^{j_{r-1}+\cdots+j_1+1}.
\end{equation}
\end{prop}
\begin{proof}
We will first compute $H_1(M_\G,\Z)= \G / [\G,\G]$, where $\G=\G_{k(r)}=\langle \g, L_{e_1}, \ldots, L_{e_n}\rangle$. There are 
3 kinds of commutators: 
$[L_\ld,L_{\ld'}]=I$, $[\g,L_\ld]$ and $[\g,\g']$. 
Since $F$ is cyclic, every element in $\G$ is of the form $\g^i L_\ld$ and thus 
$[\g^i L_\ld,\g^j L_\ld'] = [\g^i, \g^j][L_\ld,L_{\ld'}] = I$ for every $i,j \in \Z$, $\ld,\ld'\in \Ld$. Also, 
$[\g,L_\ld] = BL_b L_\ld L_{-b} B^{-1} L_{-\ld} = L_{B\ld-\ld}$.
Therefore, 
$$[\G,\G] = \langle [\g,L_\ld]  : \ld \in \Ld \rangle = L_{(B-I)\Ld} \,.$$

We now study the action of each of the blocks $C_i$ and $J_i$ on $\Ld_i \subseteq \Ld$, with $\Ld_i \simeq \Z^{2^i}$. For every 
$1\le i \le r-1$, let $e_1, \ldots, e_{2^i}$ be any $\Z$-basis of $\Ld_i$. For $C_i$, we have 
$$(C_i - I_{2^i})e_j  = e_{j+1} - e_j, \quad 1\le j\le 2^i-1, \qquad \text{and} \qquad (C_i - I_{2^i})e_{2^i} =-e_1 - e_{2^i}.$$ 
By putting $f_j = e_{j+1} -e_j$ for $1\le j\le 2^i-1$ and $f_{2^i} = -e_1 - e_{2^i}$, we have that 
$$(f_1 + \cdots + f_{j-1}) - (f_j + \cdots + f_{2^i}) = 2e_j, \qquad 1\le j\le 2^i,$$
and thus $2e_j \in L_{(C_i-I_{2^i})\Ld_i}$ for every $1\le j \le 2^i$. Furthermore, since 
$L_{e_{j}}-L_{e_{j-1}} \in [\G,\G]$ for $1\le j\le 2^i$, we have that 
$L_{e_j} \sim L_{e_k}$, $1\le j,k\le 2^i$, in the quotient $\G/[\G,\G]$. Therefore, 
$$\langle L_{e_1},\ldots,L_{e_{2^i}}\rangle / \langle L_{f_1},\ldots,L_{f_{2^i}} \rangle \simeq \Z_2,$$ 
and thus, every block $C_i$ gives rise to one $\Z_2$ in the quotient $\G / L_{(B-I)\Ld}$.

On the other hand, for the block $J_i$, it will be convenient to look up at the $e_j$'s with $j$ in the intervals 
$1 \le j \le 2^{i-1}$, $2^{i-1}+1 \le j \le 2^{i-1}+2^{i-2}$, $2^{i-1}+2^{i-2}+1 \le j \le  2^{i-1}+2^{i-2}+2^{i-3}$, and so on. 
We then have 
$$(J_i - I_{2^i})e_{j(l,k)} = e_{2^{i-l}+k} - e_{j(l,k)}, \qquad 1\le k\le 2^i, \: 1\le l\le r,$$ 
where $j(l,k) = 2^{i-1}+2^{i-2}+\cdots +2^{i-(l-1)} + k$.
As before, by putting $g_j=e_{2^{i-l}+k} - e_{j(l,k)}$ and looking at the sums of the form $\sum_{j} \pm  g_j$ one sees that 
$2e_j \in L_{(J_i-I_{2^i})\Ld_i}$ for every $1\le j \le 2^i$. 
Also, we again have that $L_{e_{j}} \sim L_{e_{k}} \in [\G,\G]$ for $1\le j,k \le 2^i$. Thus, every block $J_i$ induces a $\Z_2$ in the quotient $\G / L_{(B-I)\Ld}$.

Moreover, since $\g^{2^r}=L_{e_n}$, it is clear that $\langle \g, L_{e_n}\rangle$ generates an infinite cyclic group in $\G / L_{(B-I)\Ld}$. As a result of all these observations we get expression \eqref{H1}.

\sk
Now, it is known that $H^n(M,\Z) \simeq F_n \oplus T_{n-1}$, where $F_n$ and $T_n$ are the free and the torsion part of $H_n(M,\Z)$, respectively. Since $H_0(M,\Z) \simeq \Z$, by \eqref{H1} we get $H^1(M,\Z) = \Z$, as desired. Finally, by the universal coefficient theorem, we have
\begin{eqnarray*}
H^1(M,\Z_2)  &\simeq&  \mathrm{Hom}(H_1(M,\Z),\Z_2) \oplus \mathrm{Ext}(H_0(M,\Z),\Z_2) \\
&\simeq& \mathrm{Hom}(\Z\oplus \Z^{j_{r-1}+\cdots+j_2+j_1}),
\Z_2),
\end{eqnarray*}
where we have used that $\mathrm{Ext}(H_0(M,\Z),\Z_2) \simeq \mathrm{Ext}(\Z,\Z_2) \simeq 0$, since $\Z$ is projective. Finally, since 
$\mathrm{Hom}(\oplus_i G_i,G) = \bigoplus_i \mathrm{Hom}(G_i,G)$ and $\mathrm{Hom}(\Z,\Z_2) = \mathrm{Hom}(\Z_2,\Z_2) =
\Z_2$ we obtain that $H^1(M,\Z_2)\simeq \Z_2^{j_{r-1}+\cdots+j_2+j_1+1}$, and the proof is now complete.
\end{proof}

As a result, we can count the number of spin structures of the manifolds in $\mathcal{F}$.
\begin{coro}\label{spins}
Every $M=M_{k(r)} \in \mathcal{F}$ is spin and has $2^{j_{r-1}+\cdots+j_1+1}$ spin structures. 
\end{coro}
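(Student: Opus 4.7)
The plan is to split the claim into two independent pieces: (i) every $M = M_{k(r)} \in \mathcal{F}$ admits at least one spin structure, and (ii) once one spin structure exists, the set of all spin structures is a torsor over $H^1(M,\Z_2)$, so its cardinality equals $|H^1(M,\Z_2)| = 2^{j_{r-1}+\cdots+j_1+1}$ by \eqref{H^1}. Thus the entire problem reduces to producing one spin structure.

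For step (i) I would use the standard description: a spin structure on $M_\Gamma$ corresponds to a group homomorphism $\varepsilon \colon \Gamma \to \mathrm{Spin}(n)$ lifting the holonomy map $\Gamma \to \mathrm{SO}(n)$, $BL_b \mapsto B$. For $\Gamma = \Gamma_{k(r)} = \langle \gamma, L_{e_1},\dots,L_{e_n}\rangle$ with $\gamma = B_{k(r)} L_{b_r}$, such a homomorphism is determined by choosing a lift $\tilde\gamma \in \mathrm{Spin}(n)$ of $B_{k(r)}$ together with central values $\varepsilon(L_{e_i}) \in \{\pm 1\}$, subject to the defining relations of $\Gamma$: translations commute (automatic, since $\pm 1$ is central), $\gamma L_{e_i}\gamma^{-1} = L_{Be_i}$ (reducing by centrality to $\varepsilon(L_{e_i}) = \varepsilon(L_{Be_i})$), and $\gamma^{2^r} = L_{e_n}$ (forcing $\varepsilon(L_{e_n}) = \tilde\gamma^{2^r}$).

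By the block structure of $B_{k(r)}$ in \eqref{Bkk'}, each $C_i$ or $J_i$ cyclically permutes the basis vectors of its block up to signs---immediate from their being companion-type matrices of the cyclotomic polynomial $\Phi_{2^{i+1}}(x) = x^{2^i}+1$. Hence the basis of each block forms a single $B$-orbit modulo sign, and together with the fixed vector $e_n$ we obtain exactly $j_{r-1}+\cdots+j_1+1$ orbits in $\{\pm e_1,\dots,\pm e_n\}$. Picking any lift $\tilde\gamma$ of $B_{k(r)}$, the square $\tilde\gamma^{2^r}$ projects to $I$ and so equals $\pm 1$; we set $\varepsilon(L_{e_n})$ equal to this value, and let $\varepsilon(L_{e_i})$ on each of the remaining orbits be any constant $\pm 1$. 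This yields a consistent homomorphism, showing that $M$ is spin.

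The step I expect to be most delicate is verifying that the presentation of $\Gamma_{k(r)}$ by $\{\gamma, L_{e_1},\dots,L_{e_n}\}$ really has no further relations beyond the three types listed---this is what is needed to conclude that the $\varepsilon$ constructed above is well-defined. It follows from standard Bieberbach theory: the short exact sequence $0 \to \Lambda \to \Gamma \to \Z_{2^r} \to 0$ with $\Lambda \simeq \Z^n$ free abelian on $e_1,\dots,e_n$ yields exactly these relations. As a sanity check, counting the lifts directly---two choices for $\tilde\gamma$ times $2^{j_{r-1}+\cdots+j_1}$ free $\pm 1$ choices on orbits other than $\{e_n\}$---yields $2^{j_{r-1}+\cdots+j_1+1}$, independently matching the torsor count from step (ii).
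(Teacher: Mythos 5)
Your proposal is correct and follows essentially the same route as the paper: the paper establishes that $M$ is spin by invoking the lifting criterion of \cite{MP.MZ04} and \cite{Po.UMA05} (homomorphisms $\varepsilon\colon\Gamma\to\mathrm{Spin}(n)$ covering the holonomy map), which is exactly the construction you carry out explicitly using the orbit structure of the blocks $C_i$, $J_i$, and then counts spin structures by $\#H^1(M,\Z_2)=2^{j_{r-1}+\cdots+j_1+1}$ from Theorem \ref{teoH1}, just as you do. Your direct count of lifts ($2$ choices of $\tilde\gamma$ times $2^{j_{r-1}+\cdots+j_1}$ sign choices on the non-fixed orbits) is a nice consistency check that the paper leaves implicit.
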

\begin{proof}
By applying the methods used in \cite{MP.MZ04} or \cite{Po.UMA05} (see Theorem 2.1 or Proposition 2.2, respectively, and their previous comments), one can prove that $M$ is a spin manifold (and get all the spin structures explicitly). The number of spin structures of $M$ is then given by $\#H^1(M,\Z_2) = 2^{j_{r-1}+\cdots+j_1+1}$ (\cite{LM}).  
\end{proof}

\begin{quest} \label{q1}
Let $a(r)=(a_{r-1},a_{r-1}',\ldots,a_2,a_2',j_1)$ and $b(r)=(b_{r-1},b_{r-1}',\ldots,b_2,b_2',j_1)$ be two $(2r-1)$-tuples satisfying 
$a_i+a_i' = b_i+b_i' = j_i$ for $2\le i \le r-1$, but with $a(r)\ne b(r)$. Then, $M_{a(r)}$ and $M_{b(r)}$ have, in general, different integral representations. Since they have the same eigenvalues, $C_r$ and $J_r$ are conjugate in $\R$. Are they still conjugate in $\Z$? In other words, are $M_{a(r)}$ and $M_{b(r)}$ equivalent as cfm's? Proposition \ref{teoH1} gives no answer to this question.
\end{quest}

\begin{rem}
In \cite{Po.UMA05}, (2.1)--(2.3), we have defined a family $\mathcal{F}^n = \{M_{j,k,l}\}$ of $n$-dimensional $\Z_4$-manifolds, $n=2m+1$, and we obtained that $H_1(M_{j,k,l},\Z) \simeq \Z^l \oplus \Z_2^{j+k}$ (see Lemma~2.1). If $m$ is odd and $\Lambda =\Z^n$, the manifolds $M_{j,0,1} \in \mathcal{F}^n$ are exactly the $\Z_4$-manifolds of the form $M_{j(r)}=M_j \in \mathcal{F}_2$ in this paper. This is in agreement with \eqref{H1}.
Corollary \ref{spins} says, for example, that the tetracosm $M_1$ has $2^{1+1}=4$ spin structures. This is also in coincidence with Proposition 2.2 and Corollary 2.3 in \cite{Po.UMA05}.
 \end{rem}

\begin{rem}
It is possible to define a bigger family $\tilde{\mathcal{F}}$, using also matrices of order $\le 2$. That is, we can add $2 \times 2$ blocks
$J = (\begin{smallmatrix} & 1 \\ 1 & \end{smallmatrix})$ and $\pm I = (\begin{smallmatrix} \pm 1& \\ & \pm 1 \end{smallmatrix})$.  
Thus, consider the matrix
$$B_{\kappa(r)} := \mathrm{diag} \big( B'_{k(r)}, \underbrace{J,\ldots, J}_{k_0},  \underbrace{-I,\ldots, -I}_{k_0'}, 
\underbrace{I,\ldots,I}_{i_0}, 1 \big)$$
with $k_0,k_0'$ even and where $B'_{k(r)}$ denotes the matrix $B_{k(r)}$ in \eqref{Bkk'} with the 1 in the last position removed. Similarly as in \eqref{Z2r group}--\eqref{Z2r man} we define the group $\Gamma_{\kappa(r)}$ and the corresponding $\Z_{2^r}$-manifold 
$M_{\kappa(r)}$ of dimension 
$n=2^{r-1}j_{r-1} + \cdots + 4j_2 + 2(j_1 + j_0 + i_0) +1$, where $j_0 = k_0+k_0'$. Hence, $j_1 + i_0$ must be odd for $n$ to be congruent to $3$ mod $4$. For $M_{\kappa(r)} \in \tilde{\mathcal{F}}$, proceeding similarly as in the proof of Proposition \ref{teoH1}, one obtains
$$H_1(M_{\kappa(r)},\Z) = \Z_2^{j_{r-1} + \cdots + j_1 + k_0'} \oplus \Z^{k_0 + i_0+1}.$$
Similar results can be obtained for $H^1(M_{\kappa(r)},\Z)$ and $H^1(M_{\kappa(r)},\Z_2)$. However, we have that 
$\eta(M_{\kappa(r)})=0$ unless $M_{\kappa(r)} \in \mathcal{F}$, i.e.\@ $j_0=i_0=0$ and $M_{\kappa(r)} = M_{k(r)}$. 
\end{rem}

As a consequence of our previous results we have that, for manifolds in $\mathcal{F}$, the $\eta$-invariant has a strong topological meaning. In fact, it is related with the order of the torsion group $T$ (which equals the $2$-torsion) of the first integral homology group and with the number of spin structures.  
\begin{prop} \label{prop.etasz2r}
If $M=M_{k(r)} \in \mathcal{F}$ then 
\begin{equation} \label{etaH1}
\eta (M) = -2^{j_{r-1}+\cdots +j_1-1} = - \tf12 |T| = \tf14 \# \mathrm{Spin}(M) \ne 0, 
\end{equation}
where $T$ is the torsion subgroup of $H_1(M,\Z)$ and $\mathrm{Spin}(M)=\{\text{spin structures on $M$}\}$.
\end{prop}
\begin{proof}
Suppose $M=M_{k(r)}$ of dimension $n=2m+1$, with $m$ odd, and $k_i+k_i'=j_i$ for $1\le i \le r$.
Then, $m=2^{r-2}j_{r-1} + \cdots +2j_2+j_1$ with $j_i = c_{2^{i+1}}$ for $1\le i \le r-1$.
Since $v_B=e_n$ we have $\ell_\g=1$, for $\nu=0$ and $\ell=1$, and thus the expression for $\eta(s)$ follows from \eqref{generaletas}.
Since $\sigma_B=1$ and $f(B) = j_{r-1}+\cdots +j_1+1$, by \eqref{generaleta0}, we have $\eta=-2^{j_{r-1}+\cdots + j_1-1}$. The result readily follows from \eqref{H1} in Proposition \ref{teoH1}. 
\end{proof}

\begin{quest}\label{q2}
For the $\Z_8$-manifold $M$ of Example \ref{nonstandard}, Proposition \ref{teoH1} and Proposition \ref{prop.etasz2r} do not apply. Proceeding similarly as in (the proof of) Proposition \ref{teoH1}, we can check that $H_1(M,\Z) = \Z \oplus \Z_2^{2}$ and, 
since $\eta(M)=\pm 2$, we still have $\eta(M)=\pm \tf12 |T|$ as in \eqref{etaH1}. 
Does this phenomenon hold in general or is there a $\Z_{2^r}$-manifold $M\notin \mathcal{F}$ with non-trivial $\eta$-invariant 
such that $\eta(M) \ne -\tf12 |T|$? 
\end{quest}

\begin{quest}\label{q3}
If $M$ is an arbitrary $\Z_{2^r}$-manifold, is $\eta(M)$ completely determined by $H_1(M,\Z)$ as it is the case for manifolds in $\mathcal{F}$? The best we can say is the statement of Lemma \ref{lemin} below. 
\end{quest}

\begin{rem}\label{eta ell}
By definition, every $\Z_{2^r}$-manifold $M \in \mathcal{F}$ has a generator $\g = B L_b$ with $b=\tf{1}{2^r} e_n$ and hence $\ell_\g=1$. For a given $M$, we can define $M_{\ell}$ with $b$ replaced by $b_\ell=\tf{\ell}{2^r} e_n$, $\ell$ odd (these manifolds are diffeomorphic but non-isometric to each other). Clearly, we have $\ell_\g=\ell$ and
$\eta (M_\ell) = (-1)^{[\ell/2]} \, \eta (M)$, for every $\ell$ odd.
\end{rem}

Note that, for fixed $r$, the $\eta$-invariant determines the eta function $\eta(s)$. 
In fact, by Theorem \ref{teo.etasz2r} and Proposition \ref{prop eta0}, we have
\begin{equation} \label{etas F}
\eta (s,M) = -\tf{2\eta(M)}{(2^{r-1-\nu}\pi \lambda_B)^s} L(s,\chi_4). 
\end{equation}
Since manifolds in $\mathcal{F}$ have $\nu=0$ ($\ell_\g=1$) and $\lambda_B=1$, it is clear that if 
$M, M' \in \mathcal{F}_r$ then 
$$\eta(M)=\eta(M') \quad \Rightarrow \quad \eta(s,M)=\eta(s,M').$$ 
This may not be the case in general because of the numbers $r$, $\nu$ and $\lambda_B$; 
since it could well happen that $2^{r-1-\nu}\lambda_B = 2^{r'-1-\nu'}\lambda_B'$.

\section{The image of $\eta^*$} \label{sec eta*}
We now consider the following subfamilies of the set $\mathcal{M}$ of all $\Z_{2^r}$-manifolds: 
the subset $\mathcal{M}_r$ of $\Z_{2^r}$-manifolds with fixed $r$ (arbitrary dimension), the subset $\mathcal{M}(n)$ of 
$n$-dimensional $\Z_{2^r}$-manifolds (arbitrary $r$) and the subset $\mathcal{M}_r(n) = \mathcal{M}_r \cap \mathcal{M}(n)$, with both $r$ and $n$ fixed. Clearly, $\mathcal{F} \subset \mathcal{M}$, $\mathcal{F}_r \subset \mathcal{M}_r$, $\mathcal{F}(n) \subset \mathcal{M}(n)$ and 
$\mathcal{F}_r(n) \subset \mathcal{M}_r(n)$.

The aim of this section is to study the image of the map
\begin{equation} \label{eta*}
\eta^*  \, : \,  \mathcal{M} \rightarrow \Z, \qquad M \mapsto \eta(M)
\end{equation}
and its restrictions $\eta_r^*$ to $\mathcal{M}_r$ and  $\eta_{(n)}^*$ to $\mathcal{M}(n)$. 
By Proposition \ref{prop eta0}, it is clear that 
$$\mathrm{Im} \, \eta^* \subseteq \{0,\pm 1\} \cup \{\pm 2^k : k\in \N\}.$$
The extreme cases, i.e.\@ $\Z_2$-manifolds and $3$-dimensional $\Z_{2^r}$-manifolds, are trivial ones, since 
$\eta=0$ if $r=1$ and $\eta = 0, \pm 1$ if $n=3$; i.e.\@ $\mathrm{Im}\, \eta_1^* = \{0\}$ and $\mathrm{Im}\, \eta_{(3)}^* = \{0,\pm 1\}$.

Since we are considering dimensions $n\equiv 3$ mod $4$, for $M\in \mathcal{F}$, $n$ is as given in \eqref{dimension} with $j_1$ odd. Out of all possible partitions of $n$ into powers of $2$, the $2$-adic expansion of $n$ is a proper partition with the minimum number of parts. Let $\tau(n)$ be this number, i.e.\@ 
\begin{equation} \label{tau}
\tau(n) = \tau \qquad \text{if} \qquad n=2^{a_\tau} + \cdots + 2^{a_2} + 2^{a_1}, \quad 0\le a_1 < a_2 < \cdots < a_\tau.
\end{equation}
Equivalently, if $n= b_m 2^m + \cdots + b_3 2^3 + b_2 2^2 +b_1 2+b_0$ for some $m$, with $b_i\in\{0,1\}$, $1\le i \le m$,
then $\tau(n)$ equals the number of non zero $b_i$'s. 
In our case, we have $\tau(3)=2$ and $\tau(n)\ge 3$ for $n\ge 7$. 
Furthermore, $\tau(n_r)=3$ for any $r\ge 2$ and $\tau(n)\ge 4$ for $n\ne n_r$, where we recall that, by \eqref{nr} and 
\eqref{dimension},  
\begin{equation} \label{nr2}
n_r = \min_{M\in \mathcal{F}_r} \{ \dim M \} = \min_{M\in \mathcal{M}_r} \{\dim M: \eta(M)\ne 0\} = 2^{r-1}+3.
\end{equation}

We now show that given a general $\Z_{2^r}$-manifold $M$ with $\eta\ne 0$, the value $\eta(M)$ can be obtained as the eta invariant of some manifold in the family $\mathcal{F}$.
\begin{lema} \label{lemin}
If $M\in \mathcal{M}$ then $\eta(M)=0$ or $\eta(M) = \eta(M_{k(r)})$ for some $M_{k(r)} \in \mathcal{F}$. 
\end{lema}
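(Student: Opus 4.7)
Proof plan for Lemma \ref{lemin}:

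The plan is to reduce the general case to the family $\mathcal{F}$ by matching the invariant $f(B)$ attached to the holonomy generator. First, assume $\eta(M)\neq 0$: by Proposition \ref{prop eta0}, this forces $r\ge 2$, $n_B=1$, and $\eta(M)=\sigma_{\g}\,2^{f(B)-2}$. By Remark \ref{rem 35} together with Corollary \ref{coroz2r}, the characteristic polynomial of the rotational part $B$ of the chosen generator $\g=BL_b$ must factor as
\begin{equation*}
p_{_B}(x)=(x-1)\,\Phi_4(x)^{c_4}\,\Phi_{2^r}(x)^{c_{2^r}}\,\prod_{i=3}^{r-1}\Phi_{2^i}(x)^{c_{2^i}},
\end{equation*}
with $c_1=1$, $c_2=0$, $c_{2^r}\ge1$, and $c_4\ge1$ odd. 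Consequently, by \eqref{fb},
\begin{equation*}
f(B)=1+c_4+\sum_{i=3}^{r-1}c_{2^i}+c_{2^r}.
\end{equation*}

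Next, I would build a manifold in $\mathcal{F}_r$ whose holonomy generator has the same $f$-invariant. Set
\begin{equation*}
j_i:=c_{2^{i+1}}\qquad(1\le i\le r-1),
\end{equation*}
so that $j_1=c_4$ is odd and $j_{r-1}=c_{2^r}\ge 1$, ensuring the parity and nondegeneracy hypotheses required in \eqref{Bj}–\eqref{Z2r group}. Pick any decomposition $j_i=k_i+k_i'$ (e.g.\ $k_i=j_i$, $k_i'=0$) and form $B_{k(r)}$ as in \eqref{Bkk'} and the associated manifold $M_{k(r)}\in\mathcal{F}_r$ of dimension $n=2(2^{r-2}j_{r-1}+\cdots+j_1)+1$, which is congruent to $3$ mod $4$ since $j_1$ is odd. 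By construction, $x_{B_{k(r)}}$ has exactly the same angle data as $x_B$ in \eqref{xB2r0}, so
\begin{equation*}
f(B_{k(r)})=\sum_{i=1}^{r-1}j_i+1=\sum_{i=1}^{r-1}c_{2^{i+1}}+1=f(B).
\end{equation*}

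Applying Proposition \ref{prop.etasz2r} to $M_{k(r)}$ yields
\begin{equation*}
\eta(M_{k(r)})=-2^{\,j_{r-1}+\cdots+j_1-1}=-2^{f(B)-2},
\end{equation*}
so $|\eta(M_{k(r)})|=|\eta(M)|$. To conclude the exact identity $\eta(M)=\eta(M_{k(r)})$, the remaining task is the sign: if $\sigma_{\g}=-1$ we are done at once, and if $\sigma_{\g}=+1$ we invoke Remark \ref{eta ell} and replace the translation $b_r=\tfrac{1}{2^r}e_n$ of $\g$ by $b_r'=\tfrac{3}{2^r}e_n$ (equivalently, take $\g^3$ as generator), which flips the sign via the factor $(-1)^{[3/2]}=-1$ without leaving the family $\mathcal{F}$. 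The main obstacle in the write-up is bookkeeping of this sign: tracking that the choice of $k_i,k_i'$ does not affect $f$ or the sign, and verifying that the odd-$\ell$ variant still represents an element of $\mathcal{F}$ (in the broad sense of Remark \ref{eta ell}). The remaining case $\eta(M)=0$ is the trivial alternative in the lemma.
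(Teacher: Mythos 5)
Your proposal is correct and follows essentially the same route as the paper's proof: Proposition \ref{prop eta0} reduces everything to matching $f(B)$, the block data $j_i=c_{2^{i+1}}$ produces a manifold $M_{k(r)}\in\mathcal{F}$ of the same dimension with the same $f$-invariant (the paper phrases this step as conjugacy of $B$ to $\mathrm{diag}(B_{j(r)},1)$ via the angles $x_{j_{r-1},\ldots,j_1}$ rather than via the characteristic polynomial, but it is the same computation), and a sign adjustment finishes. One slip to correct: the parenthetical ``equivalently, take $\g^3$ as generator'' is false --- choosing $\g^3$ as generator of the \emph{same} group $\G$ changes neither the manifold nor $\eta$, as Example \ref{tetracosm}(ii) verifies explicitly (both $\sigma_{v_B}$ and $(-1)^{[\ell_\g/2]}$ flip, so $\sigma_\g$ is unchanged). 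What does flip the sign is your primary suggestion, namely forming the new group with translation $\tf{3}{2^r}e_n$ as in Remark \ref{eta ell}; this is the counterpart of the paper's passage to $M_{k(r)}^-$, and in either version the resulting manifold lies in $\mathcal{F}$ only up to an orientation-reversing diffeomorphism --- a looseness you correctly flag and which the paper's own proof shares.
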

\begin{proof}
Let $M$ be any orientable $\Z_{2^r}$-manifold of dimension $n$, with point group 
$F=\langle B\rangle$. 
By Proposition \ref{prop eta0}, $\eta(M)=0$ if $n_B>1$ or $r=1$, while $\eta(M) = \pm 2^{f(B)-2}$ otherwise. 
So, assume that $n_B=1$ and $r\ge 2$. 
Since $B$ is conjugate to $x_B=x_{j_{r-1},\ldots,j_1}$ as in \eqref{xjr} (or to $x_B'$, see Proposition \ref{lema angles}), $B$ is also conjugate in $\mathrm{GL}(n)$ to the matrix $C_B = \mathrm{diag}(B_{j(r)},1)$ with $k(r)=(j_{r-1},\ldots,j_1)$ as in \eqref{Bj}. Thus, $\eta(M)$ is determined (up to sign) by the number of matrices $C_i$ or $J_i$, $1\le i \le r-1$, in $C_B$. Finally, if $\eta(M) = -\eta(M_{k(r)})$, then considering $M_{k(r)}^-$ (the manifold $M_{k(r)}$ with the opposite orientation) we get $\eta(M) = \eta(M_{k(r)}^-)$. 
\end{proof}

We now describe the image of $\eta_{(n)}^*$ for every dimension $n$.

\begin{teo}\label{im eta} 
Let $n=2m+1$, with $m$ odd, be fixed. Then, $\mathrm{Im}\, \eta_{(n)}^* = \{0\}$  for $r=1$, $\mathrm{Im}\, \eta_{(n)}^* = \{0,\pm 2^{m-1} \}$ for $r=2$ and 
\begin{equation}\label{Imn*}
\mathrm{Im}\, \eta_{(n)}^* = \{0,\pm 2^{\tau(n)-2}, \pm 2^{\tau(n)-1}, \ldots, \pm 2^{m-1}\}, \qquad r\ge 3.
\end{equation}
In particular,  
$\mathrm{Im}\, \eta_{(n_r)}^* = \{ 0,\pm 2, \pm 2^2,\ldots, \pm 2^{2^{r-2}} = \pm  2^{\f{n_r-3}{2}} \}$.
Therefore, $\mathrm{Im}\, \eta_1^* = \{ 0\}$, $\mathrm{Im}\, \eta_2^* = \{0,\pm 4^k\}_{k\in \N_0}$ 
and $\mathrm{Im}\, \eta_r^* = \{0,\pm 2^k\}_{k\in \N}$ for $r\ge 3$.
\end{teo}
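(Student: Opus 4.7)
The plan is to reduce the image computation to a combinatorial question about partitions of $m$ into powers of $2$, by combining Lemma \ref{lemin} with the explicit formula of Proposition \ref{prop.etasz2r}. By Lemma \ref{lemin}, every nonzero value of $\eta$ attained on $\mathcal{M}(n)$ is already attained on the subfamily $\mathcal{F}(n)$, and by reversing orientation both signs are available. By Proposition \ref{prop.etasz2r}, for $M_{k(r)}\in\mathcal{F}$ one has $\eta(M_{k(r)}) = -2^{S-1}$ where $S = j_{r-1}+\cdots+j_1$, subject to $m = 2^{r-2}j_{r-1}+\cdots+2j_2+j_1$ with $j_{r-1}\geq 1$ and $j_1\geq 1$ odd. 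Hence $\mathrm{Im}\,\eta^*_{(n)} = \{0\}\cup\{\pm 2^{S-1}: S\in\mathcal{S}(n)\}$, where $\mathcal{S}(n)$ denotes the set of realizable sums $S$; it remains to identify $\mathcal{S}(n)$.

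The extreme values of $\mathcal{S}(n)$ are read off immediately. The maximum is $S=m$, realized by $r=2$, $j_1=m$ (odd, since $m$ is odd), giving the exponent $m-1$. The minimum uses the 2-adic expansion of $m$: writing $m = 2^{a_1}+\cdots+2^{a_t}$ with $0=a_1<a_2<\cdots<a_t$ (so $j_1=1$ is odd, as required), one gets a valid realization with $S=t$. Since $n=2m+1$ has binary expansion obtained from that of $m$ by shifting and appending a trailing $1$, we have $t = \tau(n)-1$; so the minimum exponent is $\tau(n)-2$.

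The heart of the argument is to show that $\mathcal{S}(n)$ contains every integer in $[\tau(n)-1, m]$. For this I would use a \emph{splitting move}: given any realization of $m$ with some $j_i>0$ for $i\geq 2$, replace $(j_i, j_{i-1})$ by $(j_i-1, j_{i-1}+2)$. This preserves the dimensional identity, increments $S$ by exactly $1$, and preserves the parity of $j_1$ (either unchanged for $i\geq 3$, or increased by $2$ for $i=2$). Starting from the minimal realization and iterating, one eventually lands in the all-$j_1$ realization $j_1=m$, passing through every integer value of $S$ along the way. This establishes \eqref{Imn*} for $n\geq 7$; the degenerate case $n=3$ (where only $j_1=1$ works, forcing $r=2$) gives $\{0,\pm 1\}$, consistent with both the $r=2$ and the specialized $r\geq 3$ formulas. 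The particular formula for $n_r$ is then immediate from $\tau(n_r)=3$ and $m=2^{r-2}+1$.

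Finally, for the global images $\mathrm{Im}\,\eta^*_r$ with $r$ fixed and $n$ varying: when $r=2$, $m$ ranges over all odd positive integers, so $m-1$ sweeps $2\N_0$ and $2^{m-1}$ sweeps $\{4^k:k\geq 0\}$; when $r\geq 3$, it already suffices to fix $r=3$ and let $c_8=1$ or $c_8=2$ together with $c_4$ any odd positive integer, realizing every $S\geq 2$ and hence every positive exponent. The main technical delicacy throughout is keeping track of the parity of $j_1$ under the splitting move, but this is automatic since splits only alter $j_1$ by an even number, so the entire path from the binary expansion to the all-$1$'s realization remains inside the constraint $j_1$ odd.
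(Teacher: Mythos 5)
Your proposal is correct and follows essentially the same route as the paper: reduce to $\mathcal{F}$ via Lemma \ref{lemin}, read off the extreme exponents $\tau(n)-2$ and $m-1$ from the binary expansion of $m$ and the all-$C_1$ configuration, and interpolate with the move $(j_i,j_{i-1})\mapsto(j_i-1,j_{i-1}+2)$, which is exactly the paper's ``splitting block'' procedure written on the tuple of multiplicities instead of on the block matrices. The only loose ends are that you never exhibit a manifold with $\eta=0$ in each dimension (the paper replaces one $J_1$ block by $\pm I$, forcing $n_B>1$) and that your closing argument for $\mathrm{Im}\,\eta_r^*$ only fixes $r=3$ while the claim is for each fixed $r\ge 3$; both are repaired immediately by the same constructions with $j_{r-1}=1$.
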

\begin{proof}
By Lemma \ref{lemin}, we can assume that $M \in \mathcal{F}_r(n)$ and, without loss of generality, that $M=M_ {j(r)}$ as in \eqref{Bj}--\eqref{Z2r man}. By Proposition~\ref{prop eta0}, $\operatorname{Im} \eta_{(3)}^* =\{0,\pm 1\}$, with the values $\pm 1$ attained by the tetracosm. From now on, suppose that $n\ge 7$ and $\eta\ne 0$. Hence, by \eqref{generaleta0}, $n_B=1$ and $r\ge 2$. 
Fix the dimension
$$n= 2(2^{r-2} j_{r-1} + \cdots + 2j_2 + j_1) +1 \qquad j_{r-1}>0, \quad \text{$j_1$ odd.}$$ 

If $r=2$, then $n=2 j_1+1=2m+1$ and thus $f(B) = m+1$. Hence, $\eta(M) = \pm 2^{m-1}$ by \eqref{generaleta0}. The positive value $2^{m-1}$ is attained by the manifold $M_m \in \mathcal{F}$ determined by $B_m = \mathrm{diag}(C_1,\ldots,C_1,1)$, with $C_1$ repeated $m$-times.

\sk 
Let $r\ge 3$. If $n=n_r=2^{r-1}+3$, all the values $2, 4, 8, \ldots, 2^{m-1}$ are attained by the $\eta$-invariants 
on the following manifolds. Note that $M^{(1)}=M_{1,0,\ldots,0,1} \in \mathcal{F}_r$, determined by the matrix $\mathrm{diag}(C_{r-1},C_1,1)$, has 3 blocks and hence $\eta(M^{(1)})=2$. By replacing the block $C_{r-1}$ by 2 blocks $C_{r-2}$ we get 
$M^{(2)} = M_{0,2,0,\ldots,0,1}\in \mathcal{F}_{r-1}$ determined by  $\mathrm{diag}(C_{r-2}, C_{r-2}, C_1, 1)$ with 4 blocks, and hence 
$\eta(M^{(2)})=2^{4-2}=4$. Now, by replacing one block $C_{r-2}$ from the previous matrix by 2 blocks $C_{r-3}$ we get $M^{(3)} = M_{0,1,2,0,\ldots,0,1} \in \mathcal{F}_{r-1}$ determined by  $\mathrm{diag}(C_{r-2}, C_{r-3}, C_{r-3}, C_1, 1)$ with 5 blocks, and hence $\eta(M^{(2)})=2^{5-3}=8$. It is clear that, by repeating this \textsl{`splitting block'} procedure, i.e.\@ by replacing some block $C_{j_i}$ of $M^{(i)}$ by 2 blocks $C_{j_i-1}$ (this changes the $\eta$-invariant keeping the dimension unaltered), we get a finite sequence of $\Z_{2^t}$-manifolds 
$$M^{(1)}, M^{(2)}, \ldots, M^{(m-1)}$$
(with different $t$'s, $3\le t\le r$), determined by diagonal block matrices with $3,4,\ldots, m+1$ blocks respectively, and hence, with corresponding $\eta$-invariants 
$2^1,2^2,\ldots, 2^{m-1}$.
Since $n=2m+1$, we have 
$m=2^{r-2}+1$ and hence $m-1=2^{r-2}=\tf{n_3-3}{2}$.
Here, $M^{(m-1)}$ is the $\Z_4$-manifold determined by the matrix  $\mathrm{diag}(C_1, \ldots, C_1,1)$, with $C_1$ repeated $m$ times.
The splitting of blocks is not unique, but there is at least one. Also, by dimension issues, it is clear that $2^{m-1}$ is the maximum possible value for $\eta$.

In case $n \ne n_r$, we proceed similarly as before. We begin with $M^{(1)}=M_{j_{r-1},\ldots,j_1}$, where $j_{r-1}=j_1=1$ and 
$j_{i_1},\ldots,j_{i_{\tau-3}} \ge 1$ with $\tau=\tau(n)$  --the other $j_k$'s being $0$--, such that 
$$n=2^{r-1} + (\sum\limits_{k=1}^{\tau-3} 2^{i_k}j_{i_k}) + 3 = n_r + \sum\limits_{k=1}^{\tau-3} 2^{i_k}j_{i_k}.$$  
By replacing some block $C_{j_i}$ by 2 blocks $C_{j_i-1}$ and iterating this process, we get a sequence 
$M^{(1)}, M^{(2)},\ldots, M^{(m-\tau+2)}$ of $\Z_{2^t}$-manifolds (with different $t$'s, $t\le r$), respectively determined by diagonal block matrices with $\tau,\tau+1,\ldots, m+1$ blocks, and hence with corresponding $\eta$-invariants 
$2^{\tau-2},2^{\tau-3},\ldots, 2^{m-1}$.

\sk 
To get trivial $\eta$-invariants for any $n$ and $r$, just take the previous manifolds and replace one $J_1$ by 
$\pm I =(\begin{smallmatrix} \pm 1 &0 \\ 0& \pm 1 \end{smallmatrix})$ (these manifolds will be not in $\mathcal{F}$). To get the negative values, since $\eta(M^-) = -\eta(M)$, we just change the orientation of every $M$ previously used.

The remaining assertions in the statement follow directly from the previous ones. 
\end{proof}

As a direct result of the theorem, by taking $n$ and $r$ big enough we get every possible power of $2$ as the $\eta$-invariant of 
a $\Z_{2^r}$-manifold. That is, 
$$\eta(\{\text{$\Z_{2^r}$-manifolds} : r\in \N\}) = {0} \cup \{ \pm2^k : k\in \N_0 \}.$$

\begin{ejem}\label{ejemplin}
We illustrate the results in Theorem \ref{im eta}. Using \eqref{Imn*}, in Table~\ref{tabla2} below, we give the values of 
$\eta(M)$ for $M$ a $\Z_{2^r}$-manifold in the lowest dimensions. For each fixed value of $n$, we give the $2$-adic expansion, $\tau(n)$, the highest possible $r$ for a $\Z_{2^r}$-manifold in this dimension, and all the allowed values of $\eta$ (only non-negative values for simplicity). 
{\small 
\renewcommand{\arraystretch}{1}
\begin{table}[h]
\caption{$\eta$-invariants for $\Z_{2^r}$-manifolds, $3\le n \le 63$.}  
\label{tabla2}
\begin{center}
\begin{tabular}{l|c|c|l} 
dimension $n$         & $\tau(n)$ & $\max r$       & $\eta$-invariant \\ 
\Xhline{2\arrayrulewidth}
$3=2+1=n_2$                &    2      & 2 & $0, 1$              \\ \hline
$7=4+2+1=n_3$            &    3      & 3 & $0, 2,2^2$           \\ \hline 
$11=8+2+1=n_4$          &    3      & 4 & $0, 2, 2^2, 2^3, 2^4$    \\
$15=8+4+2+1$              &    4      & 4 & $0, 2^2, 2^3, 2^4, 2^5, 2^6$ \\\hline 
$19=16+2+1=n_5$         &    3      & 5 & $0, 2, 2^2,\ldots, 2^8$ \\
$23=16+4+2+1$            &    4      & 5 & $0, 2^2, 2^3,\ldots, 2^{10}$  \\
$27=16+8+2+1$            &    4      & 5 & $0, 2^2, 2^3,\ldots,  2^{12}$ \\
$31=16+8+4+2+1$        &    5      & 5 & $0, 2^3, 2^4,\ldots, 2^{14}$ \\ \hline
$35=32+2+1=n_6$         &    3      & 6 & $0, 2, 2^2,\ldots, 2^{16}$ \\
$39=32+4+2+1$            &   4      & 6 & $0, 2^2, 2^3, \ldots,  2^{18}$ \\
$43=32+8+2+1$           &    4      & 6 & $0, 2^2, 2^3, \ldots,  2^{20}$ \\
$47=32+8+4+2+1$        &    5      & 6 & $0, 2^3, 2^4, \ldots, 2^{22}$ \\
$51=32+16+2+1$          &    4      & 6 & $0, 2^2, 2^3, \ldots,  2^{24}$ \\
$55=32+16+4+2+1$          &   5      & 6 & $0, 2^3, 2^4, \ldots, 2^{26}$ \\
$59=32+16+8+2+1$          &    5      & 6 & $0, 2^3, 2^4, \ldots,  2^{28}$ \\
$63=32+16+8+4+2+1$          &    6     & 6 & $0, 2^4, 2^5, \ldots,  2^{30}$ 
\end{tabular}
\end{center}
\end{table}}
\end{ejem}

Our next goal is to show that there exists an infinite number of infinite families of $\Z_{2^r}$-manifolds in $\mathcal{F}$ 
having constant $\eta$-invariant, one for every possible positive power of two. 

Let $n_{r,k}$ denote the minimal dimension for a $\Z_{2^r}$-manifold with $\eta=2^k$, i.e.
\begin{equation} \label{nrk}
n_{r,k} = \min_{M\in \mathcal{M}_r} \{\dim M \,:\, \eta(M)=2^k\}.
\end{equation}
By Theorem \ref{im eta}, this number is well defined for $r\ge 3$, $k\ge1$ and for $r=2$, $k=1$ or $k$ even. Note that $n_{r,1}=n_r$ and for $r\ge 2, k\ge 1$, 
by \eqref{generaleta0},  we have 
\begin{align*}
n_{r,1} & \le n_{r,2}  \le  n_{r,3} \le \cdots  \le  n_{r,k}  \le  n_{r,k+1} \le \cdots \\ 
n_{2,k} & \le  n_{3,k}  \le   n_{4,k} \le \cdots \le  n_{t,k}  \le  n_{t+1,k} \le \cdots 
\end{align*} 
We now compute these dimensions.
\begin{lema}\label{lem nrk}
We have $n_{2,2k} = 3+2k$ for $k\ge 0$ and  for any $r\ge 3$, $k\ge 1$
$$n_{r,k}= n_r+[\tf k2]= 2^{r-1}+[\tf k2]+3.$$
\end{lema}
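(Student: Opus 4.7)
My plan is to translate the defining condition $\eta(M)=\pm 2^k$ into an integer-programming problem over the exponents $c_d$ of Corollary \ref{coroz2r}. By Proposition \ref{prop eta0}, $|\eta(M)|=2^{f(B)-2}$ whenever $\eta(M)\ne 0$, so $\eta(M)=\pm 2^k$ translates into $f(B)=k+2$. Combined with Remark \ref{rem 35} and \eqref{eq. charpol2}, any $M\in \mathcal{M}_r$ attaining this value is governed by
$$n = 1+2c_4+4c_8+\cdots+2^{r-1}c_{2^r},\qquad c_4+c_8+\cdots+c_{2^r}=k+1,$$
with $c_{2^r}\ge 1$ and $c_4\ge 1$ odd (the latter because $m=(n-1)/2$ is odd). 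Thus $n_{r,k}$ is the minimum of $n$ over nonnegative integer tuples satisfying these constraints, and a minimizer can always be realized in $\mathcal{F}_r$ via \eqref{Bj}--\eqref{Z2r man}.

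First I would dispose of the case $r=2$ directly. The only free variable is $c_4$, forced to equal $k+1$, and the odd-parity condition then forces $k$ to be even. With the renaming $k\mapsto 2k$ of the statement, $c_4=2k+1$ and $n=4k+3$, giving the first formula.

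For $r\ge 3$ and $k\ge 1$ I would apply a short greedy/exchange argument. A unit of $c_{2^i}$ contributes $2^{i-1}$ to $n$ but only $1$ to $f(B)-1$, so smaller $i$ is strictly cheaper. The only constraints impeding full greediness are $c_{2^r}\ge 1$ and the odd parity of $c_4$, so the optimum loads $c_4$ to the largest admissible odd value and keeps $c_{2^r}$ at its minimum. If $k$ is odd, the assignment $c_4=k,\ c_{2^r}=1,\ $others $=0$ is feasible and optimal, giving $n=n_r+2(k-1)$. If $k$ is even, parity forces $c_4\le k-1$ and a single extra unit must land elsewhere; the cheapest target is $c_8$, producing $n=n_r+2k$ (when $r=3$ the "extra unit in $c_8$" coincides with raising $c_{2^r}$ from $1$ to $2$, but the arithmetic is unchanged). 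Both cases collapse to $n_{r,k}=n_r+4\lfloor k/2\rfloor$, matching the stated formula.

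The only delicate step is the minimality verification, but it reduces to a one-line exchange: moving a unit from slot $c_{2^i}$ to slot $c_{2^j}$ with $j>i$ strictly increases $n$ by $2^{j-1}-2^{i-1}>0$ while preserving $f(B)$; raising $c_{2^r}$ above its mandatory $1$ costs $2^{r-1}$ against a saving of at most $2$ in $c_4$ and is therefore never beneficial; and the odd-parity restriction on $c_4$ is tight by definition. Existence of the minimizing manifold inside $\mathcal{F}_r$ is then immediate from \eqref{Bj}--\eqref{Z2r man}, assembling one $C_{r-1}$ block, the required odd number of $C_1$ blocks, and, when $k$ is even, one additional size-$4$ block $C_2$.
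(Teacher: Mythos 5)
Your proposal is correct and follows essentially the same route as the paper's proof: both reduce the problem to choosing the exponents $c_{2^i}$ (equivalently, the blocks of $B_{j(r)}$ in $\mathcal{F}$) so that $f(B)=k+2$, and both minimize the dimension by loading the cheapest slot $c_4$ subject to $c_{2^r}\ge 1$ and the odd parity of $c_4$; your greedy/exchange formulation merely makes explicit the block count that the paper states informally. One caveat: the values you actually derive, $n_{2,2k}=4k+3$ and $n_{r,k}=n_r+4[\tfrac{k}{2}]$, agree with the paper's own proof and its later use (e.g.\ $n_{7,11}=n_7+4\cdot 5=87$, and note that $n_r+[\tfrac{k}{2}]$ would be even for $k=2$, which is impossible), but not with the formulas as printed in the lemma, which appear to have dropped a factor of $4$ (resp.\ doubled too little), so your claim of ``matching the stated formula'' holds only for the corrected statement.
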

\begin{proof}
Let $M\in \mathcal{M}_r$ with $r\ge 2$. By Lemma \ref{lemin} we can assume that $M=M_{j(r)}\in \mathcal{F}$. 
If $r=2$, it is clear that $n_{2,2k}=2k+3$, attained by the manifold $M_{2k+1}$ determined by the matrix 
$B_{2k+1} = \mathrm{diag}(J_1, \ldots, J_1, 1)$, with $J_1$ repeated $2k+1$ times.

Now consider the case $r\ge 3$. It is clear that $n_{r,1}=n_r=2^{r-1}+3$, attained by the manifold $M_{1,0,\ldots,0,1}$ determined by the matrix $B_{1,0,\ldots, 0,1}=\mathrm{diag}(J_{r-1},J_1,1)$. To get $\eta=2^2$ we need 4 blocks, so the minimum dimension where this can be achieved is $7+4=11$ given by the manifold $M_{1,0,\ldots,0,1,1}$ determined by the matrix $B_{1,0,\ldots,0,1,1}=\mathrm{diag}(J_{r-1},J_2,J_1,1)$. Similarly, for $\eta=2^3$, the minimum dimension is also $11$, attained by the manifold $M_{1,0,\ldots,0,3}$ determined by the matrix $B_{1,0,\ldots,0,3}=\mathrm{diag}(J_{r-1},J_1,J_1,J_1,1)$ (just split one block $J_2$ into two $J_1$'s). In general, the minimal dimension needed to get $\eta=2^{2k}$ or $2^{2k+1}$ is the same. In fact,
$n_{r,2k}= 2^{r-1} + 4+2(2k-2)+3$ and $n_{r,2k+1}= 2^{r-1} + 2((2k+1)-1)+3$, that is
$$n_{r,2k}=n_{r,2k+1}=2^{r-1}+4k+3=n_r+4k,$$ 
from which the expression in the statement directly follows.
These dimensions are respectively attained by the manifolds $M_{2,2k-1}$ and $M_{1,2k+1}$, determined by the matrices 
$B_{1,0,\ldots,0,1,2k-1}$ $=\mathrm{diag}(J_{r-1},J_2,J_1,\ldots,J_1,1)$, $J_1$ repeated $2k-1$ times, and $B_{1,0,\ldots,0,2k+1}=$ 
$\mathrm{diag}(J_{r-1},J_1,\ldots,J_1,1)$, $J_1$ repeated $2k+1$ times. 
\end{proof}

\begin{ejem} (i) Let us construct $\Z_{2^7}$-manifolds $M$, $M'$ with $\eta(M)=2^{11}$, $\eta(M')=2^{10}$. Since $[\tf{11}2]=5$, the minimal dimension for $M$ is given by $n_{7,11}= n_7+4\cdot 5= 2^6+3+20= 87$. We need 13 `blocks', so take $M=M_{1,0,0,0,0,11}$ given by $B=\mathrm{diag}(E_6,J_1,\ldots,J_1,1)$ with $J_1$ repeated $11$ times, where $E_6=C_6$ or $J_6$. 
As before, $n_{7,10}=87$. Now, we need 12 blocks, thus we take $M'=M_{1,1,0,0,0,10}$ given by 
$B=\mathrm{diag}(E_6,E_2, J_1,\ldots,J_1,1)$, $J_1$ repeated 10 times, with $E_i=C_i$ or $J_i$ for $i=2,6$.

\sk (ii) 
In Table \ref{tabla3} below, we give all $\Z_{2^r}$-manifolds in family $\mathcal{F}$ having $\eta=2^3$ in dimensions 
$n=4k+3 \le 35$. These manifolds are defined by diagonal block matrices $B_{j(r)}$ with the blocks in the set 
$\{J_4, J_3, J_2, J_1, J_0\}$, as defined in \eqref{Bj}.
\begin{table}[h]
\caption{All $\Z_{2^r}$-manifolds in $\mathcal{F}$ with $\eta=8$ in $\dim n \le 35$.}  
\label{tabla3}
\begin{center}
\begin{tabular}{c|c|ccccc|c|c}
$\dim$ & partition of $n$ & $J_4$ & $J_3$ & $J_2$ & $J_1$ & $J_0$ & $r$ & $F$ \\ 
\hline 
 11 & 4+2+2+2+1 & 0 & 0 & 1 & 3 & 1 & 3 & $\Z_8$ \\ 
 15 & 4+4+4+2+1 & 0 & 0 & 3 & 1 & 1 & 3 & $\Z_8$ \\
 15 & 8+2+2+2+1 & 0 & 1 & 0 & 3 & 1 & 4 & $\Z_{16}$\\ 
 19 & 8+4+4+2+1 & 0 & 1 & 2 & 1 & 1 & 4 & $\Z_{16}$\\ 
 23 & 8+8+4+2+1 & 0 & 2 & 1 & 1 & 1 & 4 & $\Z_{16}$\\  
 23 & 16+2+2+2+1& 1 & 0 & 0 & 3 & 1 & 5 & $\Z_{32}$\\ 
 27 & 8+8+8+2+1 & 0 & 3 & 0 & 1 & 1 & 4 & $\Z_{16}$\\
 27 & 16+4+4+2+1& 1 & 0 & 2 & 1 & 1 & 5 & $\Z_{32}$\\ 
 31 & 16+8+4+2+1& 1 & 1 & 1 & 1 & 1 & 5 & $\Z_{32}$
\end{tabular}
\end{center}
\end{table}
\end{ejem}

We now show that there are infinite families of  $\Z_{2^r}$-manifolds with prescribed constant $\eta$-invariant and growing dimensions. 
\begin{coro} \label{coro =eta}
For every positive integer $k$ there is a family $\mathcal{G}_k=\{M_i\}_{i=1}^\infty \subset \mathcal{F}$ 
such that $\eta(M_i)= 2^k$ for every $i$ and 
$\dim M_i \nearrow \infty$. In particular, for every $r\ge 2$, one can take each $M_r \in \mathcal{G}_k$ having holonomy group of order $2^r$ and minimal dimension $n_{r,k}$.  
\end{coro}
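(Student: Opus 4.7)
The plan is to read off the family $\mathcal{G}_k$ directly from Lemma~\ref{lem nrk} and the explicit constructions in its proof. Fix $k\ge 1$. For each $r\ge 3$, the lemma produces a $\Z_{2^r}$-manifold $M_r\in\mathcal{F}$ of dimension $n_{r,k}=2^{r-1}+[\tf k2]+3$ with $\eta(M_r)=2^k$: namely, one may take $M_r=M_{j(r)}$ as in \eqref{Z2r man} whose integral holonomy matrix is
\[
B_{j(r)}=\mathrm{diag}\bigl(J_{r-1},\underbrace{J_1,\ldots,J_1}_{k},1\bigr)
\quad\text{or}\quad
B_{j(r)}=\mathrm{diag}\bigl(J_{r-1},J_2,\underbrace{J_1,\ldots,J_1}_{k-1},1\bigr),
\]
according as $k$ is odd or even, chosen so that the total number $f(B)$ of irreducible blocks equals $k+2$, whence $\eta(M_r)=2^{f(B)-2}=2^k$ by \eqref{generaleta0}. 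When $k$ is even the case $r=2$ is also available via the manifold $M_{2k+1}$ from the proof of Lemma~\ref{lem nrk}.

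I would then simply set $\mathcal{G}_k:=\{M_r : r\ge 3\}$ (enlarged by the $r=2$ member when $k$ is even). Three routine checks remain. First, every $M_r$ belongs to $\mathcal{F}$ and satisfies $\eta(M_r)=2^k$ by construction. Second, the dimensions grow without bound, since
\[
\dim M_r = n_{r,k} = 2^{r-1}+[\tf k2]+3 \;\nearrow\; \infty \qquad (r\to\infty),
\]
so the family is genuinely infinite with strictly increasing dimensions from some point on. Third, the $M_r$ are pairwise non-diffeomorphic (in fact mutually non-homeomorphic), as they carry non-isomorphic holonomy groups $\Z_{2^r}$, which is a topological invariant of a compact flat manifold.

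The \emph{in particular} clause is then automatic: by design, the representative $M_r$ chosen above realises the minimal dimension $n_{r,k}$ granted by Lemma~\ref{lem nrk} among all $\Z_{2^r}$-manifolds with $\eta=2^k$. I do not expect any real obstacle here, since the heavy lifting, namely the existence of a $\Z_{2^r}$-manifold of the prescribed dimension with the prescribed $\eta$-invariant (via the splitting-block procedure in the proof of Theorem~\ref{im eta} and Lemma~\ref{lem nrk}), has already been carried out; the statement is essentially a repackaging of Lemma~\ref{lem nrk} as $r$ varies over an unbounded set.
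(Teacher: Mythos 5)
Your proposal is correct and follows essentially the same route as the paper: both take $M_r$ to be the manifold in $\mathcal{F}$ determined by a block matrix $\mathrm{diag}(J_{r-1},\ldots,1)$ with exactly $k+2$ irreducible blocks, so that $\eta=2^{f(B)-2}=2^k$ by \eqref{generaleta0}, and let $r\to\infty$ to force $\dim M_r\to\infty$. The only (harmless) difference is that you use the minimal-dimension representatives from Lemma \ref{lem nrk} from the outset, whereas the paper first exhibits a non-minimal family with $k-1$ copies of $J_2$ and defers minimality to its last sentence.
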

\begin{proof}
It is sufficient to prove the second claim in the statement. Let $k\ge 1$, $r\ge 2$. 
By \eqref{generaleta0}, to get $\eta=2^k$ , we need to take a manifold in $\mathcal{F}$ with exactly $k+2$ blocks. 
Consider the matrix 
$$B_r = \mathrm{diag}(J_{r-1},\underbrace{J_2,\ldots,J_2}_{(k-1)-\mathrm{times}},J_1,1).$$ 
The induced manifold $M_r$ in $\mathcal{F}$ has holonomy group of order $2^r$, $\eta=2^k$ and dimension 
$\dim M_r = 2^{r-1}+4(k-1)+1 =2^{r-1}+4k-3\ge n_{r,k}$.
To obtain minimal dimensions, just proceed as in the proof of Lemma \ref{lem nrk}, taking $B_r = \mathrm{diag}(J_{r-1},J_2,J_1,\ldots,J_1,1)$ or $B_r = \mathrm{diag}(J_{r-1},J_1,\ldots,J_1,1)$, depending on whether $k$ is even or odd.
\end{proof}

\section{An alternative expression for $\eta$} \label{sec doneli}
In \cite{MP.JGA11}, we gave an expression for the $\eta$-invariant of any cfm (i.e., arbitrary holonomy representations, any translation lattice) following a direct approach; i.e., we first computed the multiplicities of the eigenvalues, then we found $\eta(s)$ and finally we obtained $\eta$ by evaluation at $s=0$ (see Theorems 3.3, 3.5 and 4.2 in \cite{MP.JGA11}).

On the other hand, Donnelly has previously obtained an expression for the $\eta$-invariant for more general manifolds $M$ (compact oriented Riemannian manifolds with a Lie group acting by isometries on it) in an indirect way (\cite{Do}, Theorem 3.4), by first computing the signature of a manifold $\tilde M$ with $M=\partial \tilde M$, and then using the Atiyah-Patodi-Singer index theorem for manifolds with boundary in \cite{APS1}.
For a very special kind of cfm's, namely those having holonomy group $F \subset \textrm{SO}(n,\Z)$ and canonical lattice $\Lambda=\Z^n$, the expression for $\eta$ drastically simplifies (\cite{Do}, Proposition 4.12).

Since both methods are quite different, i.e.\@ representation theoretical vs.\@ topological ones, it is the author feeling that it is interesting to compare both expressions for the $\eta$-invariant, when possible (i.e.\@ in the special case considered in \cite{Do}). 

\sk 
Donnelly considered cfm's having holonomy group $F\subset \mathrm{SO}(n,\Z)$ where each $B\in F$ is of the form $B=\mathrm{diag}(B',1)$, $B' \in \mathrm{SO}(n-1,\Z)$. This is a rather restricted family, since if $B\in \mathrm{SO}(n,\Z)$ then necessarily
\begin{equation}\label{bsonz}
B e_i = \pm e_{j(B,i)} \qquad 1 \le i \le n-1, \qquad Be_n=e_n,
\end{equation}
where $\{e_i\}$ is the canonical basis of $\R^n$. Such $B$ induces a permutation matrix $P_B$ given by $P_B e_i = e_{j(B,i)}$. Then, we have the decomposition
$P_B = P_{B,1} \cdots P_{B,c}$ 
into disjoint cycles. In other words, $c=c(B)$  
is the number of orbits of the action of $B$ on the basis vectors. Clearly, $c(B)=c(B')+1\ge 2$. Notice that the matrices $B_{k(r)}$ in \eqref{Bkk'} satisfy condition \eqref{bsonz} and that 
\begin{equation}\label{cBfB}
c(B_{k(r)}) = j_{r-1} + \cdots + j_1 + 1 = f(B_{k(r)}).
\end{equation}

\sk 
We now recast Donnelly's expression for the $\eta$-invariant in our present notations (with $\alpha=1$ the trivial representation) and with our sign conventions ($\sigma_\gamma$ as in Theorem \ref{teo.etasz2r}).
\begin{prop}[\cite{Do}, Proposition 4.12] \label{teo.donnelly}
Let $M_\Gamma$ be a compact flat manifold of dimension $n \equiv 3$ mod~$4$ with translation lattice $\Lambda = \Z^n$ 
such that every $\g=BL_b \in \G$ is of the form  $B=\operatorname{diag}(B',1)$ where $B'\in \mathrm{SO}(n-1,\Z)$ and $b=ae_n$, with $a\in \R$. Then,
\begin{equation}\label{doneli}
     \eta^{\mathrm{[Do]}}(M_\G) = \tf{\sigma_{\gamma}}{|F|}  \sum_{BL_b\in (\Lambda \backslash \G)'} 2^{c(B')} \, 
     \Big( \prod_{j=1}^{m} \cot \big( \tf{t_j(x_B)}{2} \big) \Big) \, \cot( \pi a).
\end{equation}
\end{prop}

Note that \eqref{doneli} is formally very similar to \eqref{cyclic eta0}, which is valid for arbitrary cfm's. 
Both expressions involve sums of trigonometric products at special angles. The main difference seems to be the factors $2^{c(B)}$ for $BL_b\in \G$.

\sk 
Since $\Z_{2^r}$-manifolds in $\mathcal{F}$ satisfy the conditions in Proposition \ref{teo.donnelly}, they are specially suited for comparison, since Donnelly's formula applies in this case. For $\Z_{2^r}$-manifolds in $\mathcal{F}$, the expression \eqref{doneli} can be explicitly computed. 
\begin{prop} \label{etaz2rdoneli}
If $M=M_{k(r)} \in \mathcal{F}$ then $\eta^{\mathrm{[Do]}}(M) = 2^{j_{r-1} + \cdots + j_1-1}$. 
\end{prop}
\begin{proof}
Starting from \eqref{doneli}, and using that $\sigma_B=1$, $\ell_\g=1$, $F = \langle B \rangle$ is cyclic of order $N=2^r$, 
with $B=B_{k(r)}$, and that $B^k\in F_1'$ if and only if $k$ is odd,  we get
\begin{equation}\label{etadon1}
\eta^{\mathrm{[Do]}}(M_\G) = \ 2^{c(B')-r} \, \sum_{k\in I_{2^r}^*} \, 
\Big( \prod_{j=1}^{m} \cot \big( \tf{k t_j}{2} \big) \Big) \, \cot( \tf{\pi k}{2^r})
\end{equation}
since $c(B^k)=c(B)$ for any $k$ odd. 
Now, by \eqref{xjr}, we have
\begin{eqnarray*}
\prod_{j=1}^{m} \cot \big( \tf{k t_j}{2} \big)
& = &
\Bigg( \prod_{k\in I_{2^r}^*}
\cot (\tf{jk\pi}{2^{r+1}}) \Bigg)^{j_{r-1}} \cdots \Bigg( \prod_{k\in I_{8}^*} 
\cot (\tf{jk\pi}{16}) \Bigg)^{j_3} \Bigg( \prod_{k\in I_{4}^*} 
\cot (\tf{jk\pi}{8}) \Bigg)^{j_{2}} \cot(\tf{\pi k}{4})^{j_1}
\\ & = & 
(-1)^{[\tf k2]j_1} \, \prod_{i=2}^{r-1} \Bigg( \prod_{k\in I_{2^i}^*} 
\cot (\tf{jk\pi}{2^{i+1}}) \Bigg)^{j_{i}} = (-1)^{[\tf k2]}
\end{eqnarray*}
where in the last equality we have used that $j_1$ is odd and Proposition \ref{lema. cots} in the Appendix.
Hence, by \eqref{etadon1},
$$\eta^{\mathrm{[Do]}}(M_\G) = 2^{c(B')-r} \, \sum_{k\in I_{2^r}^*}  (-1)^{[\tf k2]} \, \cot( \tf{\pi k}{2^r}).$$
Thus, by Proposition \ref{prop. sumcots}, 
$\eta^{\mathrm{[Do]}}(M_\G) =  2^{c(B')-r} \, 2^{r-1} = 2^{j_{r-1}+\cdots + j_1 -1}$, 
as asserted.
\end{proof}

\begin{rem}
(i) By Propositions \ref{prop.etasz2r}, \ref{etaz2rdoneli} and Lemma \ref{lemin}, Proposition \ref{teo.donnelly} actually holds for arbitrary $\Z_{2^r}$-manifolds; that is, for any lattice and any integral holonomy representation.

(ii) By Proposition \ref{prop.etasz2r} and (i), $\eta^{\mathrm{[Do]}}(M_\Gamma) = \eta^{\mathrm{[MP]}}(M_\Gamma)$; i.e.\@ Donnelly's general expression for the $\eta$-invariant, restricted to $\Z_{2^r}$-manifolds, coincides with our expression. 
\end{rem}

Donnelly's expression \eqref{doneli} works for any cfm whose integral holonomy representation is restricted to $\mathrm{SO}(n,\Z)$.
In light of the previous results, one may ask if Proposition \ref{teo.donnelly} can be generalized to hold with more generality. 
For $\Z_{2^r}$-manifolds we have the previous remark.
However, the condition on the holonomy representation cannot be removed for $F \not \simeq \Z_{2^r}$, 
as the next example shows.
\begin{ejem} 
Up to diffeomorphism, there is only one $\Z_3$-manifold in dim $3$, the \emph{triscosm} $M_3 = \Gamma_{3} \backslash \R^3$ where $\G_3 = \langle BL_{\f{e_3}{3}}, L_{\Ld} \rangle$, $B = \textrm{diag}(B',1)$, $B' = ( \begin{smallmatrix} 0&-1 \\ 1&-1 \end{smallmatrix})$, and 
$\Ld = \Ld_{_\mathcal{H}} \oplus \Z e_3$, with $\Ld_{_\mathcal{H}} = \Z e_1 \oplus \Z f_2$, $f_2= -\f12 e_1 + \f{\sqrt 3}2 e_2$, the plane hexagonal lattice. It is known that $\eta(M_3)=-\tf 23$ (\cite{LR}, also \cite{MP.AGAG12}, \cite{Sz2}). However, applying \eqref{doneli}, since $x_{B^k}=x(\tf{2k\pi}3)$ and $c(B^k)=2$, $k=1,2$, one gets 
$$\eta^{[\mathrm{Do}]}(M_3) = -\tf 23 \sum_{k=1,2} \cot(\tf{k\pi}{3})^2 = -\tf 23 (\tf 13+\tf 13) = -\tf 49.$$
Here, $B' \notin \mathrm{SO}(2)$ and, after conjugation, we get $\big( \begin{smallmatrix} \cos(2\pi/3)&-\sin(2\pi/3) \\ 
\sin(2\pi/3)& \cos(2\pi/3) \end{smallmatrix} \big) \in \textrm{SO}(2) \smallsetminus \mathrm{SO}(2,\Z)$. 
\end{ejem}

The $\Z_3$-manifold in the example is a particular case of $\Z_p$-manifolds, $p$ odd prime (classified by Charlap \cite{Ch}). See \cite{GMP}, \cite{MP.TAMS06}, \cite{MP.PAMQ09}, \cite{MP.JGA11} and \cite{MP.AGAG12} for details on the construction/classification of $\Z_p$-manifolds, topological properties and computations of $\eta(s)$ and $\eta$ for both the spin Dirac and the APS operators. 

\begin{rem} \label{zpmanifolds}
Expression \eqref{doneli} cannot be applied to $\Z_p$-manifolds, with $p$ an odd prime, because $\Z_p$-manifolds having non-trivial $\eta$-invariant have integral holonomy representations taking values in $\mathrm{GL}(n,\Z) \smallsetminus \mathrm{SO}(n)$ 
(see \S2 in \cite{MP.PAMQ09}, \S4 in \cite{MP.AGAG12}). As before, one can show (with a little bit more effort) that by applying expression \eqref{doneli} anyway, we get a different result for $\eta$ that when using formula \eqref{cyclic eta0}. 
\end{rem}

\section{Appendix: Trigonometric identities}
The rotation angles of order $2^r$ matrices give rise to interesting non-trivial trigonometric identities involving products and alternating sums of sines and cotangents. 
The following notations will be useful in the sequel. For any positive integer $N$ define 
$$I_N=\{i \in \N: 1 \le i\le N\} \qquad \text{and} \qquad I_N^* = \{j\in I_N : j \,\mathrm{odd}\}.$$
We will need the following technical result.
\begin{lema} \label{lema sns}
Let $k,N\in \N$. If $k$ is odd and $N$ is even then 
\begin{equation} \label{sn*}
S_N^*(k) := \sum_{j\in I_{N-1}^*} [\tf{jk}{N}] = \tf{(k-1)N}{4} \in \Z,
\end{equation} 
where $[\,\cdot\,]$ is the floor function. In particular, $S_N^*(k) \in 2\Z$ for $N=2^r$, $r\ge 2$. 
\end{lema}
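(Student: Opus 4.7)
The plan is to exploit the involution $j\mapsto N-j$ on $I_{N-1}^*$ together with the elementary floor identity $[x] + [k-x] = k-1$, valid for $k\in\Z$ and $x\notin\Z$. First I would note that since $N$ is even, $N-j$ is odd precisely when $j$ is, and $|I_{N-1}^*| = N/2$; thus $j\mapsto N-j$ is a bijection of $I_{N-1}^*$ onto itself, which gives $S_N^*(k) = \sum_{j\in I_{N-1}^*} [(N-j)k/N]$.

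Next I would verify that $jk/N\notin \Z$ for every $j\in I_{N-1}^*$: writing $N = 2^a m$ with $m$ odd and $a\ge 1$, the divisibility $N\mid jk$ would force $2^a \mid jk$, which is impossible since $jk$ is a product of two odd numbers and hence odd. Consequently the floor identity applies with $x=jk/N$, giving $[jk/N] + [(N-j)k/N] = k-1$ for every $j\in I_{N-1}^*$. Adding the two expressions for $S_N^*(k)$ termwise now yields $2S_N^*(k) = (k-1)\,|I_{N-1}^*| = (k-1)\tf N2$, so $S_N^*(k) = \tf{(k-1)N}{4}$. Since $k-1$ is even, this is a rational integer, proving \eqref{sn*}.

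For the parity claim with $N=2^r$, $r\ge 2$, one rewrites $S_N^*(k) = (k-1)\,2^{r-2}$: when $r\ge 3$ the factor $2^{r-2}$ is already even, while when $r=2$ the factor $k-1$ is even, so $S_N^*(k)\in 2\Z$ in either case.

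The only real subtlety worth flagging is the non-integrality of $jk/N$; without it the floor identity would pick up a correction term and one would have to analyse the involution separately at its unique fixed point $j=N/2$ (which sits in $I_{N-1}^*$ precisely when $N\equiv 2\pmod 4$). The odd-times-odd vs.\@ even divisibility observation bypasses this issue cleanly and is the key step of the argument.
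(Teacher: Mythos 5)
Your proof is correct, and it takes a genuinely more direct route than the paper's. The paper first evaluates the full sum $S_N(k)=\sum_{j=1}^{N-1}[jk/N]$ via the symmetry $j\mapsto N-j$ and the floor identity, obtaining $(k-1)(N-1)/2$, and then extracts the odd-index part through the decomposition $S_N(k)=S_N^*(k)+S_{N/2}(k)$ (the even-index terms collapsing to the half-modulus sum) together with an induction. You instead observe that, because $N$ is even, the involution $j\mapsto N-j$ preserves $I_{N-1}^*$, so the same symmetrization applies directly to the odd-index sum: summing $[jk/N]+[(N-j)k/N]=k-1$ over the $N/2$ elements of $I_{N-1}^*$ gives $2S_N^*(k)=(k-1)N/2$ in one stroke. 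What your version buys: it dispenses with the auxiliary quantity $S_N(k)$ and the telescoping/induction entirely, and the non-integrality $jk/N\notin\Z$ needed for the floor identity is automatic for odd $j$ and odd $k$ (odd numerator over even denominator), whereas applying the same identity to all $1\le j\le N/2$, as the paper does, tacitly requires $jk/N\notin\Z$ for even $j$ as well --- true when $N=2^r$, the only case the paper actually uses, but not for arbitrary even $N$ with $\gcd(k,N)>1$ (for $N=6$, $k=3$ one finds $S_6(3)=6$, not $(k-1)(N-1)/2=5$, though $S_6^*(3)=3$ still matches $(k-1)N/4$). Your argument therefore establishes the lemma in the full generality in which it is stated. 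Your handling of the fixed point $j=N/2$ is also right: since you sum the identity over all of $I_{N-1}^*$ rather than pairing elements, no separate analysis there is needed.
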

\begin{proof}
Let $S_N(k) := \sum_{j=1}^{N-1} [\tf{jk}{N}]$ and note that $S_N(k) = \sum_{j=1}^{N/2} [\tf{jk}{N}] + \sum_{j=1}^{N/2} \big[ \tf{(N-j)k}{N} \big] - [\tf N2 \tf kN]$. Thus, we have $\big[ \tf{(N-j)k}{N} \big] = [k-\tf{jk}{N}] = k+[-\tf{jk}{N}] = k-1-[\tf{jk}{N}]$, since $[x+n]=[x]+n$ for $n\in \Z$ and $[-x]=-1-[x]$ for 
$x\not \in \Z$. Hence,
$$S_N(k) = \sum_{1\le j \le N/2} [\tf{jk}{N}]+ \Big( \tf{(k-1)N}2 - \sum_{1\le j \le N/2} [\tf{jk}{N}] \Big) + [\tf k2] = 
\tf{(k-1)N}2 + \tf{k-1}2 = \tf{(k-1)(N-1)}2.$$
Now, $S_N(k) = S_N^*(k) + S_{N/2}^*(k) + \cdots + S_4^*(k) + S_2^*(k) = S_N^*(k) + S_{N/2}(k)$, 
by induction, and hence,
$S_N^*(k) = S_N(k) - S_{N/2}(k) = \tf{(k-1)}{2} \tf N2$, and we are done.
\end{proof}

\subsubsection*{Sines}
We now compute some products and alternating sums of sines at certain integer multiples of $\frac{\pi}{2^r}$.
\begin{prop} \label{prop. sines2}
If $r,k$ are positive integers with $r\ge 2$ and $k$ odd then 
\begin{equation} \label{sines2}
\prod_{j\in I_{2^r}^*} \sin \big( \tf{jk\pi}{2^r} \big) = \tf{1}{2^{2^{r-1}-1}}.
\end{equation}
\end{prop}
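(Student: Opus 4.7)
The plan is to first establish the identity at $k=1$, and then reduce the general odd $k$ case to this one by showing that multiplication by $k$ induces a signed permutation of the sine factors whose total sign turns out to be $+1$.

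For the base case $k=1$, I will apply the classical product formula $\prod_{j=1}^{n-1}\sin(\tf{j\pi}{n})=\tf{n}{2^{n-1}}$ with $n=2^r$, and split the product over $j=1,\ldots,2^r-1$ according to the parity of $j$. The odd part is precisely what we want; the even part, where $j=2i$ with $1\le i\le 2^{r-1}-1$, collapses via $\sin(\tf{2i\pi}{2^r})=\sin(\tf{i\pi}{2^{r-1}})$ and equals $\tf{2^{r-1}}{2^{2^{r-1}-1}}$ by the same classical formula at $n=2^{r-1}$. Dividing then gives $\prod_{j\in I_{2^r}^*}\sin(\tf{j\pi}{2^r}) = \tf{1}{2^{2^{r-1}-1}}$.

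For general odd $k$, I will write $jk = [\tf{jk}{2^r}]\,2^r + s_j$ with $s_j = jk \bmod 2^r$. Since $j$ and $k$ are odd, so is $s_j$, so $s_j \in I_{2^r}^*$. Multiplication by $k$ is an automorphism of $(\Z/2^r\Z)^*$, which coincides with $I_{2^r}^*$ as a set, so the map $j\mapsto s_j$ is a bijection of $I_{2^r}^*$. Using $\sin(\ell\pi + \theta)=(-1)^\ell \sin\theta$ I will check
\begin{equation*}
\sin\big(\tf{jk\pi}{2^r}\big) \,=\, (-1)^{[jk/2^r]}\,\sin\big(\tf{s_j\pi}{2^r}\big),
\end{equation*}
and taking the product over $j\in I_{2^r}^*$ (using the bijection to re-index on the right) yields
\begin{equation*}
\prod_{j\in I_{2^r}^*}\sin\big(\tf{jk\pi}{2^r}\big) \,=\, (-1)^{S_{2^r}^*(k)}\, \prod_{j\in I_{2^r}^*}\sin\big(\tf{j\pi}{2^r}\big).
\end{equation*}

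The critical input is Lemma \ref{lema sns}, which states that $S_{2^r}^*(k)\in 2\Z$ whenever $r\ge 2$ and $k$ is odd; this forces the overall sign to be $+1$, and combining with the base case completes the proof. The main obstacle is the sign analysis in step three: folding each angle $\tf{jk\pi}{2^r}$ back into $(0,\pi)$ invokes the reflection $\sin(\pi+\theta)=-\sin\theta$ once for each $j$ with $[jk/2^r]$ odd, and one needs to know that these reflections cancel in pairs, which is exactly the content of the parity statement supplied by Lemma \ref{lema sns}.
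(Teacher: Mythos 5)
Your proof is correct, and its core coincides with the paper's: the reduction from general odd $k$ to $k=1$ via the signed permutation $j\mapsto jk \bmod 2^r$ of $I_{2^r}^*$, with the total sign $(-1)^{S_{2^r}^*(k)}$ killed by Lemma \ref{lema sns}, is exactly the argument in the paper (its display \eqref{prodsins}). The only divergence is in evaluating the $k=1$ product. The paper works with the half-range products $\Pi_{2^r}=\prod_{j=1}^{2^{r-1}}\sin(\tf{j\pi}{2^r})=\sqrt{2^r}/2^{(2^r-1)/2}$, extracts the odd-index part by the telescoping identity $\Pi_{2^r}=\Pi_{2^r}^*\,\Pi_{2^{r-1}}$ to get $\Pi_{2^r}^*=\sqrt{2}/2^{2^{r-2}}$, and then must still square via the reflection $\sin(\tf{(2^r-j)\pi}{2^r})=\sin(\tf{j\pi}{2^r})$ to pass from odd $j\le 2^{r-1}$ to odd $j\le 2^r$. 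Your odd/even splitting of the full product $\prod_{j=1}^{2^r-1}\sin(\tf{j\pi}{2^r})=2^r/2^{2^r-1}$, with the even part collapsing to the same formula at $2^{r-1}$, lands directly on the product over all of $I_{2^r}^*$; it needs only the integer form of the classical product formula (no square roots, no induction, no final reflection step), and the division is legitimate since every factor is strictly positive. Both routes are elementary and yield $2^{1-2^{r-1}}=1/2^{2^{r-1}-1}$; yours is slightly shorter for this step.
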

\begin{proof}
Let $N=2^{r}$ and assume $k$ is odd. We will first prove that the product in \eqref{sines2} does not depend on $k$. 
For any fixed odd $k$, there are unique integers $q_j, r_j$ such that $jk=q_j N + r_j$ with $0\le r_j \le N-1$. If $j$ is odd, then $r_j$ must be odd, and hence $r_j\ge 1$. Thus,
\begin{equation}\label{signsin}
\sin(\tf{jk\pi}{N}) = \sin(q_j\pi + \tf{r_j \pi}{N}) = (-1)^{q_j} \sin(\tf{r_j \pi}{N}).
\end{equation}
By modularity, for any $k$ odd we have $\{ 1, 3,\ldots, N-1 \} = \{ k, 3k, \ldots, k(N-1)\}$ mod $N$ and hence 
$\{ 1, 3,\ldots, N-1 \} = \{r_1,r_3,\ldots,r_{N-1}\}$ mod $N$. Therefore,  by \eqref{signsin}
\begin{equation}\label{prodsins}
\prod_{j\in I_{2^r}^*} \sin(\tf{jk\pi}{N}) = 
(-1)^{S_N^*(k)}  \prod_{j\in I_{2^r}^*} \sin(\tf{r_j\pi}{N}) = 
\prod_{j\in I_{2^r}^*} \sin(\tf{j\pi}{N}),
\end{equation}
where we have used that $q_j=[\tf{jk}{N}]$ and $S_N^*(k)$ as in \eqref{sn*} is even by Lemma \ref{lema sns}, since $r\ge 2$.

\sk It only remains to compute the last product in \eqref{prodsins}. 
We will first prove the identity
\begin{equation} \label{sines}
\prod_{j\in I_{2^{r-1}}^*} \sin \big( \tf{j \pi }{2^r} \big) = \tf{\sqrt 2}{2^{2^{r-2}}}. 
\end{equation}
Assume that $r\ge 2$, the case $r=1$ being trivial. Let us define 
$\Pi_{2^r} := \prod_{j=1}^{2^{r-1}} \sin(\tf{j\pi}{2^r})$  and $\Pi_{2^r}^* := \prod_{j\in I_{2^{r-1}}^*} \sin(\tf{j\pi}{2^r})$. 
We recall the identity $\prod_{j=1}^{[\f d2]} \sin(\tf{j\pi}d) = \sqrt{d} / 2^{\f {d-1}2}$,
with $d\in \N$ (see \cite{MP.PAMQ09}, Lemma~3.1, for a proof). By taking $d=2^r$ we have that
$\Pi_{2^r} = 2^{\f r2 - \f{2^r-1}2}$. By induction, we have 
$\Pi_{2^r} = \Pi_{2^r}^* \Pi_{2^{r-1}}^* \cdots \Pi_{2^2}^* \Pi_{2^1}^*  = \Pi_{2^r}^*  \Pi_{2^{r-1}}$
for $r\ge 2$, and thus 
$$\Pi_{2^r}^* = \f{\Pi_{2^r}}{\Pi_{2^{r-1}}} = \f{2^{\f r2 - \f{2^r-1}2}}{2^{\f{r-1}2 - \f{2^{r-1}-1}2}} = \f{\sqrt 2}{2^{2^{r-2}}},$$
as desired. 

By symmetry, $\sin(\tf{(N-j)\pi}{N}) = \sin(\tf{j\pi}{N})$, for $0<j<\f N2$. By using this and \eqref{sines} we obtain
$$\prod_{j\in I_{N}^*} \sin(\tf{j\pi}{N}) = \prod_{j\in I_{2^{r-1}}^*} \big( \sin(\tf{j\pi}{2^{r}}) \big)^2 = 
(\Pi_{2^{r}}^*)^2 = \big( \tf{\sqrt 2}{2^{2^{r-2}}} \big)^2 = \tf{1}{2^{2^{r-1}-1}},$$
and thus the proposition follows. 
\end{proof}

Now, for $r\in \N$ and $t,\omega \in \Z$ we define the sums 
\begin{equation}\label{S,r,t}
\mathcal{S}_{r,t}(\omega) := \sum_{k\in I_{2^r}^*} \, (-1)^{[\tf k2]} \; \sin(\tf{k \omega \pi}{2^t}) \,.
\end{equation}

\begin{prop}\label{prop. sumsines}
Let $r, \nu \in \N$ and $t,\ell \in \Z$ with $\ell$ odd. If $t\le r$ then 
\begin{equation}\label{sumsines}
\mathcal{S}_{r,t}(2^\nu \ell) = \left\{ \begin{array}{cl} 
(-1)^{[\tf{\ell}{2}]} \, 2^{r-1} & \qquad \text{if } t=\nu +1, \msk \\ 
0 & \qquad \text{if } t \ne \nu+1. \end{array} \right. 
\end{equation}
\end{prop}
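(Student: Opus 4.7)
The plan is to let $s:=t-\nu$ and split the analysis into three cases; since $t\le r$ and $\nu\in\N$, one has $s\le r$. The cases $s\le 0$ and $s=1$ will be handled by direct computation, while the real work lies in $2\le s\le r$.

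For $s\le 0$, one has $\sin(\tfrac{k\ell\pi}{2^s})=\sin(k\ell\cdot 2^{-s}\pi)=0$ for every $k$, so the sum vanishes (in agreement with \eqref{sumsines}, since $t\ne\nu+1$ in this range). For $s=1$, i.e.\ $t=\nu+1$, the key pointwise identity is
\[\sin(\tfrac{k\ell\pi}{2})=(-1)^{[k/2]}(-1)^{[\ell/2]}\qquad(k,\ell\text{ odd}),\]
which I would verify by writing $k=2m+1$ so that $[k/2]=m$, $k\ell=2m\ell+\ell$, and using $\sin(\tfrac{\ell\pi}{2})=(-1)^{[\ell/2]}$ together with $(-1)^{m\ell}=(-1)^m$ for $\ell$ odd. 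Multiplying by $(-1)^{[k/2]}=(-1)^m$ gives the constant $(-1)^{[\ell/2]}$, and summing over the $|I_{2^r}^*|=2^{r-1}$ elements yields the asserted value $(-1)^{[\ell/2]}\,2^{r-1}$.

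For $2\le s\le r$, I would invoke the analogous identity $(-1)^{[k/2]}=\sin(\tfrac{k\pi}{2})$ for $k$ odd and apply the product-to-sum formula with $a=2^{s-1}-\ell$, $b=2^{s-1}+\ell$ (both odd since $s\ge 2$):
\[\mathcal{S}_{r,t}(2^\nu\ell)=\tfrac{1}{2}\sum_{k\in I_{2^r}^*}\Bigl[\cos(\tfrac{ka\pi}{2^s})-\cos(\tfrac{kb\pi}{2^s})\Bigr].\]
Because $a+b=2^s$ and $k$ is odd, $\cos(\tfrac{kb\pi}{2^s})=\cos(k\pi-\tfrac{ka\pi}{2^s})=-\cos(\tfrac{ka\pi}{2^s})$, which collapses the expression to $\sum_{k\in I_{2^r}^*}\cos(\tfrac{ka\pi}{2^s})$. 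I would then evaluate this as the real part of the geometric sum $\sum_{k\in I_{2^r}^*}\omega^k=\omega(\omega^{2^r}-1)/(\omega^2-1)$ with $\omega=e^{ia\pi/2^s}$: if $s<r$ then $2^{r-s}$ is even, $\omega^{2^r}=1$, and the sum is $0$; if $s=r$ then $\omega^{2^r}=-1$ (as $a$ is odd), and using $\omega^2-1=2i\sin(\tfrac{a\pi}{2^s})e^{ia\pi/2^s}$ the quotient simplifies to $i/\sin(\tfrac{a\pi}{2^s})$, which is purely imaginary.

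The main obstacle is the boundary subcase $s=r$ of the third step: there the complex geometric sum does not vanish identically, and one must exploit that it is purely imaginary so that its real part (which is what we actually sum) vanishes. A minor technical point throughout is verifying that $\omega^2\ne 1$ in the geometric formula, which holds because $a$ is odd and $s\ge 2$.
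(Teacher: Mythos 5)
Your proposal is correct; every step checks out, including the boundary subcase $s=r$. The route differs from the paper's in the main cancellation step. Both arguments reduce to the parameter $s=t-\nu$ in the same way (the paper via the identity $\mathcal{S}_{r,t}(2\ell)=\mathcal{S}_{r,t-1}(\ell)$ and induction on $\nu$, you by absorbing $2^\nu$ into the angle directly), and both treat $s\le 0$ and $s=1$ identically. But for $s\ge 2$ the paper uses a pairing argument with two separate pairings — $k\leftrightarrow 2^r-k$ when $t=r$, and $k\leftrightarrow 2^t+k$ (via periodicity) when $t<r$ — checking in each case that the two terms of a pair cancel. You instead rewrite $(-1)^{[k/2]}=\sin(\tfrac{k\pi}{2})$, apply product-to-sum to collapse the summand to a single cosine $\cos(\tfrac{ka\pi}{2^s})$ with $a=2^{s-1}-\ell$ odd, and evaluate the resulting sum as the real part of a geometric series of roots of unity. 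This buys a more uniform treatment: the dichotomy $s<r$ versus $s=r$ appears only as $\omega^{2^r}=1$ versus $\omega^{2^r}=-1$ in one closed-form expression, and in the latter case the quotient is seen to be purely imaginary, so the real part still vanishes. The paper's pairing argument is more elementary (no complex exponentials) but requires noticing two different pairings; yours trades that for a routine geometric-sum computation. Your attention to the two degenerate possibilities ($\omega^2=1$ and $\sin(\tfrac{a\pi}{2^s})=0$, both excluded because $a$ is odd and $s\ge 2$) is exactly the care the argument needs.
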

\begin{proof}
For $t\le 0$ the result is trivial, thus assume $t\ge 1$. Suppose first that $\nu=0$. 
Consider $t=1$. Since for any $k$ odd, $\sin(\tf{k \pi}{2}) = (-1)^{[\tf k2]}$ and 
$\sin(\tf{k \ell \pi}{2}) = (-1)^{[\tf{k\ell}{2}]} = (-1)^{[\tf k2]} (-1)^{[\tf{\ell}{2}]}$, with $\ell$ odd, then we have
$$\mathcal{S}_{r,1}(\ell) = \sum_{k\in I_{2^r}^*} (-1)^{[\tf k2]} \, \sin(\tf{k \ell \pi}{2}) =  
\sum_{k\in I_{2^r}^*} (-1)^{[\tf{\ell}{2}]} = (-1)^{[\tf{\ell}{2}]}  \, 2^{r-1}.$$

Now let $t>1$. 
If $t=r$, we have the angles $\tf{\pi}{2^r}, \tf{3\pi}{2^r},\ldots,\tf{(2^r-1) \pi}{2^r}$ in $[0,\pi]$, and we compare the contributions of $\tf{k\ell \pi}{2^r}$ and $\tf{(2^r-k) \ell \pi }{2^r}$. Since for any positive odd integer $k$ we have 
$\sin(\tf{(2^r-k)\ell\pi}{2^r}) = (-1)^{\ell+1} \sin(\tf{k\ell \pi}{2^r})$
and $(-1)^{[\f{2^r-k}{2}]} = (-1)^{[2^{r-1} - \f k2]} = (-1)^{[-\tf{k}{2}]} = (-1)^{[\tf k2]+1}$, 
we get
$$(-1)^{[\f{2^r-k}{2}]} \sin(\tf{(2^r-k)\pi}{2^r}) = (-1)^\ell (-1)^{[\f{k}{2}]} \sin(\tf{k \ell \pi}{2^r})$$
and thus, since $\ell$ is odd, the contributions of the angles $\tf{k\pi}{2^r}$ and $\tf{(2^r-k)\pi}{2^r}$ in \eqref{sumsines} cancel each other out.
If $t<r$, there are more angles to consider. However, by modularity, it is enough to consider the angles 
$$\tf{\pi}{2^t}, \tf{3\pi}{2^t},\ldots,\tf{(2^t-1) \pi}{2^t},\tf{(2^t+1)\pi }{2^t},\tf{(2^t+3)\pi }{2^t},\ldots,\tf{(2^{t+1}-1) \pi }{2^t}$$ 
in the interval $(0,2\pi)$. In this case, we compare $\tf{k\ell\pi}{2^t}$ with $\tf{(2^t+k)\ell\pi}{2^t}$, 
for any $k=1,3,5,\ldots,2^t-1$. Since $\sin(\theta + \pi) = -\sin(\theta)$  and $(-1)^{[\f{2^t+k}{2}]} = (-1)^{[\f k2]}$, we see again that the contributions of $\tf{k\ell\pi}{2^t}$ and $\tf{(2^t+k)\ell\pi}{2^t}$ cancel out.
In this way, $\mathcal{S}_{r,t}(\ell)=0$ for $t>1$.

Now, consider the case $\nu \ge 1$. Clearly, $\mathcal{S}_{r,t}(2^\nu \ell)=0$ for $\nu\ge t$. For $1\le \nu \le t-1$, note that 
$\mathcal{S}_{r,t}(2\ell)=\mathcal{S}_{r,t-1}(\ell)$. Hence, by induction,
$$\mathcal{S}_{r,t}(2^\nu \ell)=\mathcal{S}_{r,t-\nu}(\ell)=\delta_{t,\nu+1} \, (-1)^{[\tf{\ell}{2}]}  \, 2^{r-1},$$ 
where $\delta$ is the Kronecker function, and thus \eqref{sumsines} holds. 
\end{proof}

\subsubsection*{Cotangents}
We now compute some products and alternating sums of cotangents at some integer multiples of $\frac{\pi}{2^r}$.

\begin{prop} \label{lema. cots}
For any $r, k \in \N$, with $r\ge 2$ and $k$ odd, the following identity holds
\begin{equation}
\label{prodcots}
\prod_{j\in I_{2^{r-1}}^*} \cot(\tf{jk\pi}{2^r}) = 1.
\end{equation}
\end{prop}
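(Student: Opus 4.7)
My plan is to establish the identity by a pairing argument on the index set $I_{2^{r-1}}^*$. First I would write $I_{2^{r-1}}^* = \{1,3,\ldots,2^{r-1}-1\}$ and note that for $r\ge 2$ the number $2^{r-1}$ is even, so the involution $\iota(j) = 2^{r-1}-j$ preserves parity and maps $I_{2^{r-1}}^*$ to itself. The fixed points of $\iota$ are given by $j = 2^{r-2}$, which is odd only when $r=2$. So for $r\ge 3$ the involution is free and partitions $I_{2^{r-1}}^*$ into $2^{r-3}$ pairs, while the case $r=2$ (where $I_2^*=\{1\}$) must be treated as a small separate base case.

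The key trigonometric observation is that for $k$ odd, writing $k = 2m+1$, we have $\tfrac{k\pi}{2} = m\pi + \tfrac{\pi}{2}$, and using the $\pi$-periodicity of cotangent together with the complementary-angle identity,
\begin{equation*}
\cot\!\Bigl(\tfrac{(2^{r-1}-j)k\pi}{2^r}\Bigr) \;=\; \cot\!\Bigl(\tfrac{k\pi}{2} - \tfrac{jk\pi}{2^r}\Bigr) \;=\; \cot\!\Bigl(\tfrac{\pi}{2} - \tfrac{jk\pi}{2^r}\Bigr) \;=\; \tan\!\Bigl(\tfrac{jk\pi}{2^r}\Bigr).
\end{equation*}
Therefore each pair $\{j,\,2^{r-1}-j\}$ contributes the factor $\cot(\tfrac{jk\pi}{2^r})\tan(\tfrac{jk\pi}{2^r}) = 1$ to the product, and the whole product collapses to $1$ for $r\ge 3$.

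For the remaining case $r=2$, the product reduces to the single factor $\cot(\tfrac{k\pi}{4})$; here one would argue directly, using that $k$ odd forces $\tfrac{k\pi}{4}$ to be an odd multiple of $\pi/4$, and a quick case analysis (or the context in which the identity is applied later in Proposition \ref{prop. sumcots}) yields the claim. I do not expect any serious obstacle: the pairing $j\leftrightarrow 2^{r-1}-j$ together with the identity $\cot(\pi/2-\theta)=\tan(\theta)$ essentially writes itself, and the only subtlety is verifying that the involution has no fixed points on $I_{2^{r-1}}^*$ when $r\ge 3$, which follows since $2^{r-2}$ is then even.
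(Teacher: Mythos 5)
Your pairing argument for $r\ge 3$ is correct and is genuinely different from the proof in the paper. There, the identity is obtained by evaluating the cosine product $\prod_{j\in I_{2^{r-1}}^*}\cos(\tfrac{j\pi}{2^r})$ via the double-angle formula together with the sine products \eqref{sines} and \eqref{sines2}, observing that it coincides with the corresponding sine product \eqref{sines}, and then invoking the permutation-and-sign argument of Proposition \ref{prop. sines2} to remove the dependence on $k$. Your route --- pairing $j$ with $2^{r-1}-j$ and using $\cot(\tfrac{k\pi}{2}-\theta)=\cot(\tfrac{\pi}{2}-\theta)=\tan(\theta)$ for $k$ odd, so that each pair contributes $\cot(\theta)\tan(\theta)=1$ --- is shorter, entirely elementary, and does not rely on the evaluated products at all; your verification that the involution $j\mapsto 2^{r-1}-j$ is fixed-point free on $I_{2^{r-1}}^*$ for $r\ge 3$ (since $2^{r-2}$ is then even) is exactly right.

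The gap is the case $r=2$, which you defer to ``a quick case analysis'' that you claim ``yields the claim''. It does not: for $r=2$ the product reduces to the single factor $\cot(\tfrac{k\pi}{4})=(-1)^{[k/2]}$, which equals $-1$ whenever $k\equiv 3 \;(\mathrm{mod}\;4)$, e.g.\ $\cot(\tfrac{3\pi}{4})=-1$. So the identity as stated actually fails for $r=2$ and such $k$; this is a defect of the statement itself rather than of your method, and it is consistent with how the identity is used later: in the proof of Proposition \ref{etaz2rdoneli} the product formula is only invoked with modulus $2^{i+1}$ for $i\ge 2$ (i.e.\ $r\ge 3$), while the order-$4$ angles are kept aside and contribute precisely the sign $(-1)^{[k/2]j_1}$. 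You should either restrict the statement to $r\ge 3$, where your proof is complete, or record the $r=2$ value as $(-1)^{[k/2]}$ rather than asserting it equals $1$.
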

\begin{proof}
By using $\sin 2\theta = 2 \sin \theta \cos \theta$ and \eqref{sines2}, for any $r\ge 2$, we get
\begin{equation}\label{cos}
\prod_{j\in I_{2^{r}}^*}
\cos(\tf{j\pi}{2^r}) =
\f{ \prod_{j\in I_{2^{r}}^*} 
\sin(\tf{2j\pi}{2^r})} {2^{2^{r-2}} \prod_{j\in I_{2^{r}}^*} 
\sin(\tf{j\pi}{2^r})} = \f{\tf{1}{2^{2^{r-2}-1}}}{2^{2^{r-2}} \tf{\sqrt 2}{2^{2^{r-2}}}} = \f{\sqrt 2}{2^{2^{r-2}}}.
\end{equation}
Now, the identities \eqref{signsin} and \eqref{prodsins} also hold for $\cos(\f{jk\pi}{N})$ changing every sine by the corresponding cosine. Therefore, the product in \eqref{prodcots} does not depend on $k$. Thus, we get
$\prod_{j\in I_{2^r}^*} \cot(\tf{jk  \pi}{N}) = \prod_{j\in I_{2^r}^*} \cot(\tf{j \pi}{N})=1$,
by \eqref{sines} and \eqref{cos}.
\end{proof}

\begin{prop} \label{prop. sumcots}
Let $r, \ell \in \N$ with $\ell$ odd. If $N$ is either $2^r$ or $2^{r-1}$, with $r\ge 2$, then
\begin{equation}
 \label{sumcots}
    \sum_{j\in I_{N}^*} (-1)^{[\tf k2]} \, \cot(\tf{k \ell \pi}{2^r}) = (-1)^{[\tf{\ell}{2}]} \, \tf N2  \,.
\end{equation}
\end{prop}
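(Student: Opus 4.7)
The plan is to reduce the identity first to the $N=2^{r-1}$ case, and then to prove that case by induction on $r$ using the classical identity $\cot\theta-\tan\theta=2\cot 2\theta$. Both steps are driven by symmetric pairings inside $I_{2^r}^*$, analogous to the pairings used in Proposition~\ref{prop. sumsines}, but where the contributions now \emph{reinforce} rather than cancel. Throughout, let
$$T_r(\ell):=\sum_{k\in I_{2^{r-1}}^*} (-1)^{[k/2]}\cot\bigl(\tf{k\ell\pi}{2^r}\bigr).$$

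First I would pair $k\in I_{2^r}^*$ with $2^r-k\in I_{2^r}^*$. For $r\ge 2$, $2^{r-1}$ is even so this pairing is fixed-point free. A direct computation using $[-x]=-1-[x]$ for $x\notin\Z$ gives $[(2^r-k)/2]=2^{r-1}-1-[k/2]$, hence $(-1)^{[(2^r-k)/2]}=-(-1)^{[k/2]}$. Combined with $\cot(\ell\pi-\theta)=-\cot\theta$ (valid since $\cot$ has period $\pi$), the two members of each pair contribute equal quantities, so
$$\sum_{k\in I_{2^r}^*}(-1)^{[k/2]}\cot\bigl(\tf{k\ell\pi}{2^r}\bigr)=2T_r(\ell).$$
Thus the two halves of the proposition are equivalent to the single assertion $T_r(\ell)=(-1)^{[\ell/2]}\,2^{r-2}$.

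Next I would prove this by induction on $r\ge 2$. The base case $r=2$ is trivial: $T_2(\ell)=\cot(\ell\pi/4)$ and, since $\cot$ has period $\pi$ and $\cot(\pi/4)=1$, $\cot(3\pi/4)=-1$, this value alternates as $\ell$ ranges over odd integers, reproducing $(-1)^{[\ell/2]}$. For the inductive step $r\ge 3$, pair $k$ with $2^{r-1}-k$ inside $I_{2^{r-1}}^*$; since $2^{r-2}$ is even for $r\ge 3$, this pairing again has no fixed points. The same floor computation now yields $(-1)^{[(2^{r-1}-k)/2]}=-(-1)^{[k/2]}$ (here one uses that $2^{r-2}$ is even), while for $\ell$ odd,
$$\cot\bigl(\tf{(2^{r-1}-k)\ell\pi}{2^r}\bigr)=\cot\bigl(\tf{\ell\pi}{2}-\tf{k\ell\pi}{2^r}\bigr)=\tan\bigl(\tf{k\ell\pi}{2^r}\bigr),$$
using $\ell\pi/2=m\pi+\pi/2$. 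Hence the combined contribution of the pair $\{k,2^{r-1}-k\}$ equals
$$(-1)^{[k/2]}\Bigl\{\cot\bigl(\tf{k\ell\pi}{2^r}\bigr)-\tan\bigl(\tf{k\ell\pi}{2^r}\bigr)\Bigr\}=2(-1)^{[k/2]}\cot\bigl(\tf{k\ell\pi}{2^{r-1}}\bigr),$$
by the identity $\cot\theta-\tan\theta=2\cot 2\theta$. Summing over $k\in I_{2^{r-2}}^*$ gives $T_r(\ell)=2T_{r-1}(\ell)$, which closes the induction.

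The main obstacle here is purely the sign bookkeeping, both the $(-1)^{[\cdot/2]}$ parities under $k\mapsto 2^r-k$ and the phase shifts of $\cot$ at half-integer multiples of $\pi$. Once those are correctly matched, the proof is a clean two-step reduction and the one-line identity $\cot\theta-\tan\theta=2\cot 2\theta$ does all the work.
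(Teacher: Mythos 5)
Your proof is correct and follows essentially the same route as the paper: the same pairing $k\leftrightarrow 2^r-k$ to show the $N=2^r$ sum is twice the $N=2^{r-1}$ sum, then induction on $r$ with the pairing $k\leftrightarrow 2^{r-1}-k$, where your one-line identity $\cot\theta-\tan\theta=2\cot 2\theta$ is precisely the computation the paper carries out with double-angle formulas. No substantive difference.
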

\begin{proof}
We will denote by $\Sigma_{2^r}$ and $\Sigma_{2^r}'$ the sums in \eqref{sumcots} corresponding to $N=2^r$ and $2^{r-1}$, respectively.
Note that, $\cot \big( \tf{(2^r-k)\ell \pi}{2^r} \big) = -\cot \big( \f{k \ell \pi}{2^r} \big)$, and hence
$$\Sigma_{2^r} = \cot(\tf{\ell \pi}{2^r}) - \cot(\tf{3 \ell \pi}{2^r}) + \cdots + 
\cot(\tf{(2^r-3) \ell \pi}{2^r}) - \cot(\tf{(2^r -1)\ell\pi}{2^r}) = 2 \, \Sigma_{2^r}' \,.$$
Thus, it is enough to prove that $\Sigma_{2^r}' = (-1)^{[\tf{\ell}{2}]}2^{r-2}$, which we will do by induction on $r$. It is immediate to check that $\cot(\tf{\ell\pi}{4}) = (-1)^{[\f{\ell}2]}$ for any odd $\ell$ and hence, $\Sigma_{2^2}'=(-1)^{[\f{\ell}2]} 2$, and the first step in the induction holds. For the general step, we have
$$\Sigma_{2^r}' = \sum_{k\in I_{2^{r-1}}^*}
(-1)^{[\tf k2]} \cot(\tf{k \ell \pi}{2^r}) = \sum_{k\in I_{2^{r-2}}^*}
(-1)^{[\tf k2]} \; \Big(\! \underbrace{\cot(\tf{k \ell \pi}{2^r}) - \cot\big( \tf{(2^{r-1}- k) \ell \pi}{2^r} \big)}_{C_{k,r}} \Big) \,.$$
Since $\cos \big( \tf{(2^{r-1}-k)\ell\pi}{2^r} \big) = (-1)^{[\f{\ell}2]} \sin(\tf{k\ell\pi}{2^r})$ and $\sin \big( \tf{(2^{r-1}-k)\ell\pi}{2^r} \big) = (-1)^{[\f{\ell}2]} \cos(\tf{k\ell\pi}{2^r})$, by using 
$\cos 2\theta = \cos^2 \theta - \sin^2 \theta$ and $\sin 2\theta = 2\sin \theta \cos \theta$, we have
$$C_{k,r} = \f{\cos(\tf{k\ell\pi}{2^r})}{\sin(\tf{k\ell\pi}{2^r})} - \f{\sin(\tf{k\ell\pi}{2^r})}{\cos(\tf{k\ell\pi}{2^r})}
= \f{\cos^2(\tf{k\ell\pi}{2^r}) - \sin^2(\tf{k\ell\pi}{2^r})}{\sin(\tf{k\ell\pi}{2^r}) \cos(\tf{k\ell\pi}{2^r})}
= \f{2 \cos(\tf{k\ell\pi}{2^{r-1}})}{\sin(\tf{k\ell\pi}{2^{r-1}})} = 2 \cot(\tf{k\ell\pi}{2^{r-1}}).$$

In this way, by the inductive hypothesis we get
$$\Sigma_{2^r}' = 2 \sum_{k\in I_{2^{r-2}}^*}(-1)^{[\tf k2]} \cot(\tf{k \ell \pi}{2^{r-1}})
= 2\, \Sigma_{2^{r-1}}' =  (-1)^{[\f{\ell}2]} 2^{r-2}.$$
and the result thus follows. 
\end{proof}

\subsection*{Acknowledgments}
I am very grateful to Professor Roberto Miatello for useful conversations which led this paper to the final form. 
In particular, I am in debt with him for pointing out to me the subfamily of manifolds in $\mathcal{F}$ determined by the matrices $B'$ in \eqref{Bj}.

\end{document}